\numberwithin{equation}{section}
\numberwithin{figure}{section}
\numberwithin{table}{section}
\theoremstyle{plain} 
\newtheorem{thm}{\protect\theoremname}[section]
\newtheorem{prop}[thm]{\protect\propositionname}
\newtheorem{lem}[thm]{\protect\lemmaname}
\theoremstyle{definition} 
\newtheorem{defn}[thm]{\protect\definitionname}
\newtheorem{example}[thm]{\protect\examplename}
\theoremstyle{remark} 
\newtheorem{rem}[thm]{\protect\remarkname}
\newtheorem*{ack}{\protect\acknowledgmentkname}
\subjclass[2020]{Primary 14N15, 11E81, 14F42}
\providecommand{\corollaryname}{Corollary}
\providecommand{\definitionname}{Definition}
\providecommand{\examplename}{Example}
\providecommand{\propositionname}{Proposition}
\providecommand{\remarkname}{Remark}
\providecommand{\theoremname}{Theorem}
\providecommand{\lemmaname}{Lemma}
\providecommand{\problemname}{Problem}
\providecommand{\acknowledgmentkname}{Acknowledgements}
\providecommand{\claimname}{Claim}
\begin{document}
\global\long\def\ev{\mathrm{ev}}%
\global\long\def\GW{\mathrm{GW}}%

\global\long\def\Pnd{\mathcal{P}^n(d)}%
\global\long\def\Ptd{\mathcal{P}^t(d)}%
\global\long\def\P{\mathcal{P}}%
\global\long\def\d{\mathrm{d}}%

\global\long\def\e{\epsilon}%
\global\long\def\V{\mathcal{V}}%

\title{An arithmetic count of osculating lines}
\author{Giosu{\`e} Muratore}
\address{CMAFcIO, Faculdade de Ci\^{e}ncias da ULisboa, Campo Grande 1749-016 Lisboa,
Portugal}
\email{\href{mailto:muratore.g.e@gmail.com}{muratore.g.e@gmail.com}}
\urladdr{\url{https://sites.google.com/view/giosue-muratore}}
\keywords{Osculating curves, bilinear form, Grothendieck--Witt}
\begin{abstract}
    We say that a line in $\mathbb P^{n+1}_k$ is osculating to a hypersurface $Y$ if they meet with contact order $n+1$. When $k=\mathbb C$, it is known that through a fixed point of $Y$, there are exactly $n!$ of such lines. Under some parity condition on $n$ and $\deg(Y)$, we define a quadratically enriched count of these lines over any perfect field $k$. The count takes values in the Grothendieck--Witt ring of quadratic forms over $k$ and depends linearly on $\deg(Y)$.
\end{abstract}

\maketitle

\section{Introduction}
A classical result by Salmon states that, over $k=\mathbb C$, for a general smooth surface $Y\subset \mathbb P^3_{\mathbb C}$ and a general point $p\in Y$ there are exactly two lines meeting $Y$ at $p$ with contact order $3$ \cite{MR0200123}. That result does not depend on the degree of $Y$. A possible generalization of Salmon's result consists on the number of rational curves $C\subset\mathbb P^{n+1}_{\mathbb C}$ meeting a hypersurface with contact order $(n+2)\deg (C)-1$ at a fixed general point. These curves have recently been used in a number of interesting applications \cite{MR3877435}, and their number has been computed recursively using Gromov--Witten invariants in \cite{MR4332489,MR4256011}. They are called osculating curves. In particular, the number of osculating lines to a hypersurface $Y\subset \mathbb P^{n+1}_{\mathbb C}$ is
\begin{equation}\label{eq:n!}
    n!.
\end{equation}

This result was probably first proved in \cite[Proposition~3.4]{MR3773793}. In this paper, we generalize Equation~\eqref{eq:n!} to any perfect field $k$ using $\mathbb A^1$-homotopy. In this theory, the solution of an enumerative problem is an element of the Grothendieck--Witt ring $\GW(k)$ of $k$. It is the completion of the semi-ring of isomorphism classes of nondegenerate, symmetric bilinear forms on finite dimensional vector spaces over $k$. This technique has been used, for example, to compute the number of lines in the cubic surface \cite{MR4247570}, in the quintic threefold \cite{MR4437612}, and the degree of the Grassmannian of lines \cite{MR4237952}.

\subsection{Statement of the main result}
Let $a\in k\setminus\{0\}$. We denote by $\langle a \rangle\in\GW(k)$ the class of the bilinear form $k\times k\rightarrow k$ given by $(x,y)\mapsto axy$. Given a finite extension $K/k$, the group morphism $\mathrm{Tr}_{K/k}\colon\GW(K)\rightarrow \GW(k)$ is the natural map induced by the field trace $K\rightarrow k$. We denote by $\mathbb H:=\langle 1\rangle+\langle -1\rangle$.

Let $Y\subset\mathbb P^{n+1}_k$ be a hypersurface of degree $d\ge n$ given by a homogeneous polynomial $f$, and $p\in Y$ be a point. 
If $p$ is not $k$-rational, we extend the scalars and consider $f_{k(p)}$ with a lift $\tilde p$ of $p$.

If $U$ is an affine open set of $\mathbb{P}^{n+1}_{k(p)}$ centered at $\tilde p$, the restriction of $f_{k(p)}$ to $U$ has a unique Taylor series, that is it decomposes as sum of homogeneous polynomials $f^{(i)}$ of degree $i$:
$${f_{k(p)}|}_{U}=f^{(1)}+f^{(2)}+\cdots+f^{(d)}.$$

If the subscheme $F:= f^{(1)}\cap\cdots\cap f^{(n)}$ of $\mathbb P^n_{k(p)}$ 
is finite, for each point $l\in F$ we denote by $J(l)$ the signed volume of the parallelepiped determined by the
gradient vectors of $f^{(1)},f^{(2)},\cdots,f^{(n)}$ at $l$, and
$$J(Y,p):=\sum_{l\in f^{(1)}\cap\cdots\cap f^{(n)}} 
\mathrm{Tr}_{k(l)/k(p)}\langle J(l)\rangle.$$

Our main result is the following.
\begin{thm}\label{thm:main-1}
    Let $n$ and $d$ be positive integers such that the following conditions are satisfied
    \begin{itemize}
        \item $n\equiv 2 \,\mathrm{mod} \,4$,
        \item $d\equiv 0 \,\mathrm{mod} \,2$.
    \end{itemize}
    Let $Y\subset\mathbb P^{n+1}_k$ be a general hypersurface of degree $d\ge n$, and $\mathcal{L}$ be a general $k$-rational line meeting $Y$ transversely. Then
    $$\sum_{p\in Y\cap\mathcal{L}}\mathrm{Tr}_{k(p)/k}\langle J(Y,p) \rangle=d\frac{n!}{2}\mathbb H.$$
\end{thm}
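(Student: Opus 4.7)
The plan is to identify each local contribution $J(Y,p)$ with an $\mathbb{A}^1$-Euler number on the projectivized tangent space at $p$, use the hypothesis $n\equiv 2\pmod 4$ to ensure both relative orientability and hyperbolicity, and then assemble the global sum via the trace formalism. After base-changing to $k(p)$ and fixing an affine chart centered at $\tilde p$, the polynomials $f^{(1)},\dots,f^{(n)}$ form a section of the rank-$n$ bundle $E_n:=\bigoplus_{i=1}^n \mathcal{O}(i)$ on $\mathbb{P}^n_{k(p)}$, whose common zero scheme is $F$. The Jacobian $J(l)$ at each $l\in F$ is the Scheja--Storch local index, and by the Kass--Wickelgren Poincar\'e--Hopf theorem, provided $E_n$ is relatively orientable, the sum $J(Y,p)=\sum_{l\in F}\mathrm{Tr}_{k(l)/k(p)}\langle J(l)\rangle$ coincides with the $\mathbb{A}^1$-Euler number $e^{\mathbb{A}^1}(E_n)\in\GW(k(p))$.

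The next step is to verify relative orientability and to compute $e^{\mathbb{A}^1}(E_n)$. Since $\det E_n\otimes \omega_{\mathbb{P}^n}^{-1}=\mathcal{O}\bigl((n-2)(n+1)/2\bigr)$, orientability amounts to $(n-2)(n+1)/2$ being even, which a case analysis modulo $4$ shows holds exactly when $n\equiv 2\pmod 4$---the first hypothesis. To identify the Euler class, I would exploit that $E_n$ contains even-degree summands (e.g.\ $\mathcal{O}(2)$): in $\mathbb{A}^1$-enriched intersection theory, a relatively orientable direct sum of line bundles on $\mathbb{P}^n$ that contains an even summand has purely hyperbolic Euler number (in the spirit of the $\mathbb{P}^n$ computations of Larson--Vogt and Pauli--Wickelgren). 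Matching the rank to the classical top Chern number $\prod_{i=1}^n i=n!$ then forces $e^{\mathbb{A}^1}(E_n)=\tfrac{n!}{2}\mathbb{H}$, so $J(Y,p)=\tfrac{n!}{2}\mathbb{H}_{k(p)}$ for every $p\in Y\cap\mathcal{L}$.

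Finally, using $\mathrm{Tr}_{K/k}(m\mathbb{H}_K)=m[K\!:\!k]\mathbb{H}_k$ together with the transverse B\'ezout identity $\sum_p [k(p)\!:\!k]=d$, we conclude $\sum_p\mathrm{Tr}_{k(p)/k}J(Y,p)=d\cdot\tfrac{n!}{2}\mathbb{H}$. The hardest step will be pinning down $e^{\mathbb{A}^1}(E_n)=\tfrac{n!}{2}\mathbb{H}$ rigorously, since enriched splitting principles are less transparent than their classical analogues; I expect to corroborate this via an explicit degeneration in which each $f^{(i)}$ is replaced by a product of $i$ generic linear forms, and then show that conjugate pairs among the resulting $n!$ transverse zeros contribute matching $\langle a\rangle+\langle -a\rangle=\mathbb{H}$ terms pair by pair. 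The role of the parity hypothesis $d\equiv 0\pmod 2$ is the subtlest point, since $d$ does not enter the local Euler class: it likely enforces a global orientation condition on the moduli of $(\mathcal{L},Y)$ pairs that guarantees the pointwise formula $J(Y,p)=\tfrac{n!}{2}\mathbb{H}$ is consistent as $\mathcal{L}$ varies through the $k$-rational lines.
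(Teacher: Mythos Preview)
Your approach is correct and is precisely the alternative route the paper itself sketches in Remark~\ref{rem:McKean}, but it is \emph{not} the paper's primary proof. The paper argues globally: it builds the rank-$(2n+1)$ bundle $\mathcal{V}=\mathcal{P}^n(d)\oplus\mathcal{O}_X(1)^{\oplus n}$ on the flag variety $X=\mathbb{P}(T_{\mathbb{P}^{n+1}})$, shows $\mathcal{V}$ is relatively orientable exactly when $n\equiv 2\pmod 4$ and $d\equiv 0\pmod 2$, computes $e(\mathcal{V})=d\tfrac{n!}{2}\mathbb{H}$ via a single degenerate section with one non-reduced zero, and then identifies the local indices of a generic section with the terms $\mathrm{Tr}_{k(p)/k}\langle J(Y,p)\rangle$. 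By contrast, you work fiberwise on $\mathbb{P}^n_{k(p)}$ with $E_n=\bigoplus_{i=1}^n\mathcal{O}(i)$, invoke an enriched B\'ezout computation to get $J(Y,p)=\tfrac{n!}{2}\mathbb{H}_{k(p)}$, and then sum traces. Your ``hardest step'' is already in the literature: it is exactly McKean's enriched B\'ezout theorem \cite[Theorem~1.2]{MR4211099}, so no ad~hoc degeneration is needed. The paper's route buys a direct enumerative interpretation (zeros of a section of $\mathcal{V}$ are pointed osculating lines on $\mathcal{L}$) and sets up the framework for the relatively-oriented-relative-to-a-divisor analysis when $d$ is odd; your route is shorter and more elementary once McKean is granted.

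Your uncertainty about $d\equiv 0\pmod 2$ is resolved by this comparison: that parity hypothesis is required for the paper's global argument---it is one of the two conditions for $\mathcal{V}$ to be relatively orientable on $X$ (Proposition~\ref{prop:conditionsnd})---but it plays no role in your local argument. Your proof in fact establishes the identity for all $d\ge n$ with $n\equiv 2\pmod 4$, so rather than hunting for a hidden use of $d$ even, you should simply note that your method removes that hypothesis.
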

When $k=\mathbb C$, the map 
$\mathrm{rank}\colon\GW(\mathbb C)\rightarrow \mathbb Z$ is an isomorphism, so the theorem implies Equation~\eqref{eq:n!}. Indeed, each osculating line contributes with $\langle 1\rangle$ to the sum. 
By \cite[Proposition~4.1]{MR4256011}, the number of complex osculating lines is the same for each of the general $d$ points $\{p_1,\ldots,p_d\}=Y\cap\mathcal{L}$. Hence there are $n!$ lines through each of these points.

Our result is related to B{\'e}zout--McKean Theorem, see Remark~\ref{rem:McKean}.

\subsection{Possible generalizations}
One may consider conics instead of lines. For example, the number of osculating conics to a hypersurface $Y\subset \mathbb P^{n+1}_{\mathbb C}$ at $p$ is
$$\frac{(2n+2)!}{2^{n+2}}-\frac{((n+1)!)^2}{2}.$$
This is proved in \cite[Equation~(6.1)]{MR4256011}. See also the tree-formula of \cite{mikhalkin2023ellipsoidal}. Hence, if $n=2$, there are $27$ osculating conics. Darboux \cite{BSMA_1880_2_4_1_348_1} noted that, when $\deg (Y)=3$, each line of $Y$ is coplanar to a unique osculating conic, and vice~versa. This explains why they are $27$. Recent examples of enriched counts of conics are \cite{circles,MR4589636,MR4635342}. We hope to address the problem of the enriched count of osculating curves of higher degree in the near future. We do not expect that the solution will be a multiple of the hyperbolic class $\mathbb H$.
\subsection{Outline}
We introduce $\mathbb A^1$-homotopy in Section~\ref{section:A1}. Sections~\ref{section:Results} and \ref{section:Relative} contain many preliminary results. They follow closely the style of \cite[Section~3]{MR4211099}. Section~\ref{section:Osculating} defines the relavant vector bundle, and the bundle of principal parts. The last section proves the main results. 

\begin{ack}
    The author thanks Ethan Cotterill, Gabriele Degano, Stephen McKean, Kyler Siegel, and Israel Vainsencher for many useful discussions. The author also thank Michael Stillman and Matthias Zach for their computational help, and the Reviewers for taking the necessary time and effort to review the manuscript.  
    This work is supported by FCT - Funda\c{c}\~{a}o para a Ci\^{e}ncia e a Tecnologia, under the project: UIDP/04561/2020 (\url{https://doi.org/10.54499/UIDP/04561/2020}).
    The author is a member of GNSAGA (INdAM).
\end{ack}

\section{Background in \texorpdfstring{$\mathbb A^1$}{A1}-enumerative geometry}
\label{section:A1}
Let $k$ be a field and $X$ be a smooth proper scheme over $k$ of dimension $n$. Moreover, let $E$ be a vector bundle of rank $n$ on $X$.
\begin{defn}\label{defn:rel_ori}
    The bundle $E$ is \emph{relatively oriented} if there exists a line bundle $L$ on $X$, and an isomorphism $\psi\colon\mathrm{Hom}(\det T_X,\det E)\rightarrow L^{\otimes 2}$. The pair $(L,\psi)$ is called a relative orientation of $E$.
\end{defn}
\begin{defn}
    Let $z$ be a point of $X$. A \emph{system of Nisnevich coordinates} around $z$ is a Zariski open neighborhood $U$ of $z$ in $X$, and an {\'e}tale map $\varphi\colon U\rightarrow \mathbb A^n_k$ such that the extension of residue fields $k(\varphi(z))\subseteq k(z)$ is an isomorphism.
\end{defn}
\begin{example}[{\cite[Example~12.1]{MR2242284}}]
    Let $\mathrm{char} (k)\neq 2$ and $a\in k\setminus\{0\}$. The maps $i\colon\mathbb A^1_k\setminus\{a^2\}\hookrightarrow\mathbb A^1_k$ and $(\_)^2\colon\mathbb A^1_k\setminus\{0\}\rightarrow\mathbb A^1_k$ are a system of Nisnevich coordinates around all points of $\mathbb A^1_k$.
\end{example}
\begin{example}
    Let $R:=\mathbb R[x,y]/(x^2+y^2+1)$ 
    and let $p$ be the ideal generated by $y+1$. Let $U$ be the spectrum of the localization $R_p$. The inclusion $\mathbb R[x]\hookrightarrow R_p$ induces a system of Nisnevich coordinates around $z=\mathrm{Spec}(R/p)$.
    
\end{example}
\begin{example}
    If $k\subsetneq K$ is an extension of fields, then the $k$-scheme $\mathrm{Spec}(K)$ does not admit any system of Nisnevich coordinates. Indeed, the residue fields are not isomorphic.
\end{example}
The existence of a system of Nisnevich coordinates 
is granted if $\dim X\ge1$ by 
\cite[Proposition~20]{MR4247570}. If $E$ is a vector bundle and $\varphi\colon U\rightarrow\mathbb A^n_k$ is a system of Nisnevich coordinates given by $\varphi=(\varphi_1,\ldots,\varphi_n)$, by shrinking $U$ we may suppose without loss of generality that there exists a trivialization $\widehat{\varphi}\colon{E|}_{U}\rightarrow \mathcal{O}_U^{\oplus n}$. This local trivialization induces a distinguished element $\det(\widehat{\varphi}):=\varphi_1\wedge\ldots\wedge\varphi_n$ of $\Gamma(U,\det E)$ . On the other hand, there is a distinguished element of $\Gamma(U,\det T_X)$ induced by the standard basis of $T_{\mathbb A^n_k}$.

\begin{defn}\label{defn:compatible}
    Let $(L,\psi)$ be a relative orientation of $E$. Let $\varphi\colon U\rightarrow \mathbb A^n_k$ be a system of Nisnevich coordinates, and let $\widehat{\varphi}\colon{E|}_{U}\rightarrow \mathcal{O}_U^{\oplus n}$ be a trivialization. Let $\lambda$ be the linear map sending the distinguished element of $\Gamma(U,\det T_X)$ to the distinguished element $\det(\widehat{\varphi})$ of $\Gamma(U,\det E)$. We say that $\psi$ is \emph{compatible} with $\varphi$ and the trivialization $\widehat{\varphi}$ if the induced map on global sections
    \begin{equation*}
        {\psi|}_U\colon \mathrm{Hom}_k(\Gamma(U,\det T_X),\Gamma(U,\det E))\longrightarrow \Gamma(U,L^{\otimes 2})
    \end{equation*}
    sends $\lambda$ to a square element of $\Gamma(U,L)^{\otimes 2}$.
\end{defn}

In differential topology, the Brouwer degree of a differentiable map $f\colon X\rightarrow Y$ can be computed in the following way. Take $y\in Y$ a regular value, so that the Jacobian matrix $J_f$ is invertible for each one of the finite points $x\in X$ such that $f(x)=y$. The local Brouwer degree at $x$ is $+1$ if $\det(J_f)|_x>0$, and $-1$ otherwise. The Brouwer degree is the sum of all local degrees, that is
\begin{equation}\label{eq:brouwer}
    \deg(f)=\sum_{x\in f^{-1}(y)} \mathrm{sgn}(\det(J_f)|_x),
\end{equation}
and it depends only on the homotopy class of $f$. See \cite[Chapter 17]{MR2954043} for a nice introduction.

Morel extended this notion by defining the $\mathbb A^1_k$-homotopy degree. Roughly speaking, in \cite{MR1813224} a category is constructed upon algebraic varieties over $k$, but allowing many tools of usual topology of manifolds. In particular, the unit interval is replaced by $\mathbb A^1_k$ in homotopy. In this new category 
there is a Brouwer degree 
taking value in 
$\GW(k)$ 
(see \cite[Corollary 1.24]{MR2934577} and reference therein, and \cite{MR2275634} for an introduction). Morel's construction has been made explicit in various contexts (see \cite{MR3059240,MR4648854}).

\subsection{Euler class of a vector bundle}
Let $(L,\psi)$ be a relative orientation of $E$ as in Definition~\ref{defn:rel_ori}. The map $\psi$ induces an isomorphism $\psi'\colon\det E^\vee\otimes L^{\otimes 2} \rightarrow \omega_X$. For each $0\le a,b\le n$, we may use $\psi'$, the graded Koszul complex \cite[17.2]{MR1322960}, and Serre duality to construct a perfect pairing
\begin{equation*}
    \beta_{a,b}\colon H^a(X,\wedge^b E^\vee\otimes L)\otimes H^{n-a}(X,\wedge^{n-b} E^\vee\otimes L)\longrightarrow k.
\end{equation*}
When $2a=2b=n$, $\beta_{a,b}$ is a bilinear form. For all other values of $a$ and $b$, the direct sum $\beta_{a,b}\oplus\beta_{n-a,n-b}$ is also a nondegenerate symmetric bilinear form.

\begin{defn}\label{defn:GSd_Euler}
    The Grothendieck--Serre--duality \emph{Euler number} of $E$ is $$e(E):=\sum_{0\le a,b\le n}(-1)^{a+b}\beta_{a,b}\in\GW(k).$$
\end{defn}
See \cite[Section 2]{MR4557905}. This definition does not depend on the relative orientation.

An older but more explicit definition of Euler class was given in \cite{MR4247570}. It requires a section $\sigma\colon X\rightarrow E$ with isolated zeros.




Let $z$ be a zero of $\sigma$, and let 
$\deg^{\mathbb A_k^1}_z(\sigma)\in\GW(k)$ 
denote its local $\mathbb A^1_k$-degree. In the case where $z$ is simple, has a compatible Nisnevich neighborhood isomorphic to the affine space, and $k(z)/k$ is separable, then by \cite[Proposition 15]{MR3909901} we have
\begin{equation}\label{eq:ind}
   \deg^{\mathbb A_k^1}_z(\sigma) = \mathrm{Tr}_{k(z)/k} \langle\det(J_{\sigma})|_{z}\rangle.
\end{equation}
The Jacobian $J_{\sigma}$ of $\sigma$ is computed locally at $\sigma\colon\mathbb A_k^n\rightarrow\mathbb A_k^n$. 

More generally, if $z$ is not simple,  $\deg^{\mathbb A^1_k}_z(\sigma)$ is constructed in a different way. Let 
$Z$ denote the zero locus of $\sigma$. There is a natural isomorphism
$$\mathrm{Hom}_k(\mathcal{O}_{Z,z},k)\cong\mathcal{O}_{Z,z}$$
of $\mathcal{O}_{Z,z}$-algebras. The Scheja--Storch form (\cite{MR0393056}) is the map $\eta$ corresponding to $1$ under this isomorphism. This defines a bilinear form $x\otimes y\mapsto \eta(xy)$ on $\mathcal{O}_{Z,z}$ (see \cite{MR0458467,MR0467800,MR0494226}), and $\deg^{\mathbb A_k^1}_z(\sigma)$ is the class of this bilinear form as proved in \cite[Main~Theorem]{MR3909901}. 
Finally
\begin{equation}\label{eq:euler}
    e(E)=\sum_{z\in\sigma^{-1}(0)}\deg^{\mathbb A^1_k}_z(\sigma),
\end{equation}
by \cite[Theorem 1.1]{MR4557905}. Equation~\eqref{eq:ind} makes clear the analogy between \eqref{eq:euler} and the classical Brouwer degree \eqref{eq:brouwer}.



\subsection{The non-orientable case}
Many interesting vector bundles are not relatively orientable, 
so that there is no Euler class as in Definition~\ref{defn:GSd_Euler}. Larson and Vogt partially solve this problem by introducing the relative orientability relative to a divisor.
\begin{defn}
    Let $X$ be a smooth variety, $E\rightarrow X$ be a vector bundle and $D\subset X$ be an effective divisor. We say that $E$ is \emph{relatively orientable relative to} $D$ if there exists a line bundle $L$ on $X$, and an isomorphism $$\psi\colon\mathrm{Hom}(\det T_X,\det E)\otimes\mathcal{O}_X(D) \longrightarrow L^{\otimes 2}.$$
    Equivalently, $E$ is relatively orientable on the open subvariety $X\setminus D$.
\end{defn}
At least over $\mathbb R$, this definition allows us to define the Euler class of a vector bundle that does not satisfy Definition \ref{defn:rel_ori}. So, let $X$ be a smooth real variety. Let $\sigma$ be a section of $E$ with isolated zeros. For each zero $z$, the class of the Scheja--Storch form $\deg^{\mathbb A_{\mathbb R}^1}_z(\sigma)$ is still well-defined. Let us denote by $V_D\subset H^0(E)$ the locus of $\mathbb R$-points of $H^0(E)$ with a real zero along $D$. 
\begin{lem}\label{lem:LV21}
    Let $X$ be a smooth real projective variety and $E$ be a vector bundle relatively oriented relative to an effective divisor $D$. Let $H^0(E)^\circ$ denote the space of sections with isolated zeros. Then
    $$\sum_{z\in\sigma^{-1}(0)}\deg^{\mathbb A^1_{\mathbb R}}_z(\sigma)$$
    is constant for $\sigma$ in any real connected component of $H^0(E)^\circ\setminus V_D$.
\end{lem}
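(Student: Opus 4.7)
The plan is to establish Lemma~\ref{lem:LV21} by a continuous deformation argument along paths in the target connected component. Fix $\sigma_0,\sigma_1$ in the same real connected component of $H^0(E)^\circ\setminus V_D$ and choose a continuous path $\{\sigma_t\}_{t\in[0,1]}$ joining them inside this component; such a path exists because $H^0(E)^\circ\setminus V_D$ is an open subset of the real vector space $H^0(E)$ minus a real subvariety, hence locally Euclidean, so its connected components agree with its path components. Writing
$$\Sigma(\sigma):=\sum_{z\in\sigma^{-1}(0)}\deg^{\mathbb A^1_{\mathbb R}}_z(\sigma)\in\GW(\mathbb R),$$
the problem reduces to showing that $t\mapsto\Sigma(\sigma_t)$ is locally constant in $t$.

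First I would unpack the role of the avoidance condition. The hypothesis $\sigma_t\notin V_D$ ensures that every real zero of $\sigma_t$ lies in the open set $U:=X\setminus D$, on which $E$ is relatively orientable in the sense of Definition~\ref{defn:rel_ori}. The relative orientation on $U$ fixes coherent signs for the Scheja--Storch forms at these real zeros, so their local $\mathbb A^1_{\mathbb R}$-degrees assemble into a well-defined element of $\GW(\mathbb R)$. Non-real zeros come in Galois-conjugate pairs and contribute via $\mathrm{Tr}_{\mathbb C/\mathbb R}\langle a\rangle$ for some $a\in\mathbb C^\times$, which is a multiple of the hyperbolic form $\mathbb H$; since $\langle -1\rangle\cdot\mathbb H=\mathbb H$, this contribution is insensitive to local orientation choices and poses no difficulty even if a complex zero happens to lie over $D$.

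The core step is the local invariance of $\Sigma$ across bifurcation times, where the zero scheme $Z_t=\sigma_t^{-1}(0)$ degenerates. Away from such times, $Z_t$ deforms in a smooth flat family and $\Sigma(\sigma_t)$ is automatically continuous. At a bifurcation time $t_\star$, I would isolate each degenerating cluster of zeros inside a small open ball $B\subset U$ disjoint from $D$; in a local trivialization of $E|_B$, the restriction of $\sigma_{t_\star}$ becomes a map $B\to\mathbb R^n$, and the sum of local Scheja--Storch classes over $B$ coincides with its Eisenbud--Khimshiashvili--Levine index. This index is invariant under compactly supported homotopies of maps with isolated zeros and nonvanishing boundary values, and the slice of the path $\{\sigma_t\}$ near $t_\star$ provides exactly such a homotopy. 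Summing over the finitely many bifurcating clusters, together with the orientation-independent $\mathbb H$-contributions from complex conjugate pairs that may appear or disappear, shows that $\Sigma(\sigma_t)$ does not jump at $t_\star$.

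The main obstacle I anticipate is controlling the trajectories of real zeros: a real zero migrating onto $D$ would break the local orientation of $E$ in the model above and potentially cause $\Sigma$ to jump. This is precisely what the condition $\sigma_t\notin V_D$ excludes, since a real zero arriving on $D$ at some time $t_\star$ would place $\sigma_{t_\star}\in V_D$, contradicting the choice of path within a connected component of $H^0(E)^\circ\setminus V_D$. Complex zeros may cross $D$ freely without affecting $\Sigma$, as their contribution is the orientation-independent $\mathbb H$. Combining the local conservation of Scheja--Storch classes with the global exclusion of real $D$-zeros then yields that $\Sigma(\sigma_t)$ is constant along the path, completing the argument.
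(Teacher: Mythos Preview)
The paper does not give its own proof of this lemma: it simply cites \cite[Lemma~2.4]{MR4253146} (Larson--Vogt). Your outline is essentially the argument that appears there --- connect two sections by a path inside the open set $H^0(E)^\circ\setminus V_D$, use homotopy invariance of the local Eisenbud--Khimshiashvili--Levine/Scheja--Storch class on small balls to show the total count is locally constant, invoke the definition of $V_D$ to guarantee that no real zero crosses $D$ (so the relative orientation on $X\setminus D$ is always available where it is needed), and observe that complex-conjugate pairs of zeros contribute orientation-independent multiples of $\mathbb H$. So your approach is correct and matches the cited source; the only cosmetic point is that $\Sigma$ takes values in the discrete ring $\GW(\mathbb R)$, so ``continuous'' should simply read ``locally constant.''
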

\begin{proof}
    See \cite[Lemma 2.4]{MR4253146}.
\end{proof}
Suppose that $1_D$ is a global section of $\mathcal{O}_X(D)$ that gives $D$. For a vector bundle $E$ relatively orientable relative to $D$, we say that the relative orientation $(L,\psi)$ is compatible with a system of Nisnevich coordinates $\varphi\colon U\rightarrow \mathbb A^n_k$ if $\lambda\otimes 1_D$ is a square, where $\lambda$ is like in Definition~\ref{defn:compatible}. See \cite[Definition 3.7]{MR4211099} for more details.

\section{Results on the flag variety}\label{section:Results}
Let $n$ be a nonnegative integer, and $k$ be a perfect field. Let $i\in\{0,\ldots,n+1\}$, we denote by $U_i$ the open subset
$$U_i:=\mathrm{Spec}\left ( k\left[\frac{x_0}{x_i},\ldots,\frac{x_{n+1}}{x_i}\right]\right ) \subset \mathrm{Proj}\left ( k[x_0,\ldots,x_{n+1}]\right ) =:\mathbb{P}^{n+1}_k.$$
\begin{rem}
    In \cite[Section 2]{MR4211099}, $U_i$ is defined as the open subset of $\mathbb{P}^{n+1}_k$ parameterizing points $[p_0:\ldots: p_{n+1}]$ such that $p_i\neq 0$. This notation makes sense only for $k$-points, as it denotes the point corresponding to the homogeneous ideal $$(p_ix_0-p_0x_i,\ldots, p_{i}x_{n+1}-p_{n+1}x_i).$$
    Nevertheless, we adopt the same convention. In particular, we characterize any morphism by its action on the \emph{coordinates} $\{x_0,\ldots,x_{n+1}\}$.
\end{rem}

Let $G=G(2,n+2)$ be the Grassmannian of lines in $\mathbb{P}^{n+1}_k$. It is the space of matrices of order $2\times (n+2)$ of maximal rank, modulo the action of $\mathrm{GL}(2,k)$. In particular, for each pair of distinct indices $\{i,\alpha\}\subset\{0,\ldots,n+1\}$, there is an open subset isomorphic to $\mathbb A^{2n}_k$ parameterizing those orbits in $G$ whose submatrix containing the columns $i$ and $\alpha$ is invertible. See, for example, \cite[8.1]{MR2004218}. We denote by $\mathcal{O}_G(-1)$ the determinant of the tautological vector bundle $\mathcal{S}$ of $G$, see p.~248 of {\it loc. cit.}. We denote by $X$ the variety of flags
$$\{0\}\subset k\subset k^2\subset k^{n+2}.$$
It has two natural maps:
\begin{equation*}
    \pi\colon X\rightarrow G, \,\,\,\,\, \ev\colon X\rightarrow \mathbb P^{n+1}.
\end{equation*}
The pair 
$(X,\ev)$ is naturally isomorphic to the projectivization of tangent bundle of $\mathbb P^{n+1}_k$, thus it is a $\mathbb P^n_k$-bundle over $\mathbb P^{n+1}_k$. We denote by $\mathcal{O}_X(1)$ the line bundle $\ev^*\mathcal{O}_{\mathbb P^{n+1}_k}(1)$. We will use the following properties of the Grassmannian.
\begin{prop}
    Let $p\in\mathbb P^{n+1}_k(k)$ and $Z=\ev^{-1}(p)$. In the notation above, we have the following:
    \begin{align}
    \omega_G &\cong \mathcal{O}_G(-n-2) \label{eq:omega_G}, \\
    \pi^*\mathcal{O}_G(1) &\cong \omega_\pi\otimes\mathcal{O}_{X}(2), \label{eq:pi_star_G} \\
    {\omega_{\pi}|}_Z &\cong \mathcal{O}_{\mathbb P^{n}_{k}}(1). \label{eq:omega_Z}
\end{align}
\end{prop}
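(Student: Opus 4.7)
The plan is to realize $X$ as the projective bundle $\pi\colon \mathbb{P}(S)\to G$, where $S$ denotes the tautological rank-$2$ subbundle on $G=G(2,n+2)$; the three identities then follow from standard projective-bundle and Schubert calculations.

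For \eqref{eq:omega_G}, I would combine the tautological sequence $0\to S\to \mathcal{O}_G^{\oplus n+2}\to Q\to 0$ with the canonical identification $T_G \cong S^\vee\otimes Q$. Since $\det Q = \det S^\vee$, taking determinants yields $\det T_G = (\det S^\vee)^{n}\otimes(\det Q)^{2} = (\det S^\vee)^{n+2}$, and the claim follows because $\det S^\vee = \mathcal{O}_G(1)$ is the Pl\"ucker polarization.

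For \eqref{eq:pi_star_G}, I would use the identification $X \cong \mathbb{P}(S)$ given by sending a flag $V_1\subset V_2\subset k^{n+2}$ to the pair $(V_2, V_1)$. With the classical (``lines in $S$'') convention, the tautological sub-line-bundle on $\mathbb{P}(S)$ has fiber $V_1$ at $(V_2, V_1)$, so it coincides with $\ev^*\mathcal{O}_{\mathbb{P}^{n+1}}(-1) = \mathcal{O}_X(-1)$. The relative Euler sequence
$$0\longrightarrow \mathcal{O}_X \longrightarrow \pi^*S\otimes \mathcal{O}_X(1)\longrightarrow T_\pi \longrightarrow 0$$
then gives $T_\pi \cong \pi^*\det S\otimes \mathcal{O}_X(2)$, whence $\omega_\pi \cong \pi^*(\det S)^\vee \otimes \mathcal{O}_X(-2) = \pi^*\mathcal{O}_G(1)\otimes \mathcal{O}_X(-2)$, which is the claim after tensoring by $\mathcal{O}_X(2)$.

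Finally, for \eqref{eq:omega_Z}, I would restrict \eqref{eq:pi_star_G} to the fiber $Z = \ev^{-1}(p)\cong \mathbb{P}^n_k$. Since $\ev|_Z$ is constant, $\mathcal{O}_X(2)|_Z$ is trivial and $\omega_{\pi|Z} = (\pi|_Z)^*\mathcal{O}_G(1)$. The map $\pi|_Z\colon \mathbb{P}^n_k\to G$ is the Schubert embedding of lines through $p$, given in Pl\"ucker coordinates by $q\mapsto p\wedge q$; this is the linear embedding $\mathbb{P}(k^{n+2}/\langle p\rangle)\hookrightarrow \mathbb{P}(\wedge^2 k^{n+2})$, so it pulls $\mathcal{O}_G(1)$ back to $\mathcal{O}_{\mathbb{P}^n_k}(1)$, as required. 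The only delicate point is fixing conventions on $\mathbb{P}(\cdot)$ and the identification of $\mathcal{O}_X(-1)$ with $\ev^*\mathcal{O}_{\mathbb{P}^{n+1}}(-1)$ as a tautological subsheaf; once this is done, the three formulas are routine consequences of standard constructions.
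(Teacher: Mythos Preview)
Your argument is correct and matches the paper's approach in spirit. For \eqref{eq:omega_G} and \eqref{eq:pi_star_G} the paper simply cites Eisenbud--Harris (Propositions~5.25 and Theorem~11.4), whereas you spell out the standard tautological-sequence and relative Euler-sequence computations that underlie those references; the content is the same. For \eqref{eq:omega_Z} your argument coincides with the paper's: both restrict \eqref{eq:pi_star_G} to $Z$, use that $\mathcal{O}_X(1)|_Z$ is trivial (since $\ev|_Z$ is constant), and identify $(\pi|_Z)^*\mathcal{O}_G(1)$ via the linear Pl\"ucker/Schubert embedding $\mathbb{P}(k^{n+2}/\langle p\rangle)\hookrightarrow G$.
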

\begin{proof}
    The first equation follows by \cite[Proposition~5.25]{MR3617981}. In order to prove the second formula, note that $(X,\pi)$ is naturally isomorphic the projectivization of the vector bundle $\mathcal{S}$. Theorem~11.4 of {\it loc. cit.} implies that 
    $$c_1(\omega_\pi^\vee)=\binom{1+1-0}{1-0}c_1(\mathcal{O}_X(1))+\binom{1+1-1}{1-1}c_1(\pi^*\mathcal{S}).$$
    Since $c_1(\pi^*\mathcal{S})=\pi^*c_1(\det\mathcal{S})=\pi^*c_1(\mathcal{O}_G(-1))$, we easily deduce Equation~\eqref{eq:pi_star_G}. 
    
    In order to prove the last one, 
    note that $Z$ parameterizes the set of flags
    $$\{0\}\subset p\subset k^2\subset k^{n+2}.$$
    Dividing the above equation by $p$, we see that $Z$ is naturally isomorphic to the variety of projective lines of $k^{n+2}/p$, which is $\mathbb P^{n}_{k}$. Thus there is a natural embedding $i\colon Z\hookrightarrow G$. This embedding preserves the Schubert cycles as discussed in Chapter~4.1 of {\it loc. cit.}, thus $i^*\mathcal{O}_G(1)=\mathcal{O}_Z(1)$ and the result follows from the second equation. 
\end{proof}
\begin{rem}
    If $p$ is not $k$-rational, then  $Z=\mathbb P(T_{\mathbb P^{n}_k,p}) \cong\mathbb P^{n}_{k(p)}$. Since $G$ is covered by affine spaces, the equation ${\omega_{\pi}|}_Z = \mathcal{O}_{Z}(1)$ could still be proved using Galois descent (see \cite[Example~6.2/B]{MR1045822}).
\end{rem}
Given a set of indeces $I$ and a sequence of elements $\{a_i\}_{i\in I}$, by $(\ldots,\widehat{a}_{j},\ldots)$ we mean that $a_j$ is missing from the sequence. 

There is a standard system of Nisnevich coordinates of $\mathbb P^{n}_k$ that we denote by $\{(U_i,\varphi_i)\}_{i=0}^n$. That is
\begin{equation}
    \varphi_i([p_0:\ldots: p_{n+1}])=\left (\frac{p_0}{p_i},\ldots,\frac{p_{i-1}}{p_i},\frac{p_{i+1}}{p_i},\ldots,\frac{p_n}{p_i}\right).
\end{equation}
These coordinates give a natural system of coordinates of $T_{\mathbb P^{n+1}_k}$ in the following way. Let $(x_{0},\ldots,\widehat{x}_{j},\ldots,x_{n+1})\in\mathbb A^{n+1}_k$ be local coordinates. The maps 
\begin{equation}\label{eq:trans_P}
    \varphi_i\circ\varphi_j^{-1}(x_{0},\ldots,\widehat{x}_{j},\ldots,x_{n+1})=\left(\ldots,\frac{x_{j-1}}{x_i},\frac{1}{x_i},\frac{x_{j+1}}{x_i},\ldots,\frac{x_{n+1}}{x_i}\right)\in\mathbb A^{n+1}_k
\end{equation}
are the standard gluing of the projective space along $U_j\cap U_i$. In order to obtain the standard gluing of the tangent bundle with respect to the standard basis $\{\frac{\partial}{\partial{x_t}}\}_{t\neq j}$, we take the Jacobian $J_{ij}$ of \eqref{eq:trans_P}, leading to maps
\begin{equation}\label{eq:trans_J}
    J_{ij}\left(\frac{\partial}{\partial{x_s}}\right)
    =
    \begin{cases}
        \frac{1}{x_i}\frac{\partial}{\partial{x_s}} & \text{if $s\neq i,j$}\\
        -\frac{1}{x_i^2}\frac{\partial}{\partial{x_i}}-\underset{t\neq i,j}{\sum}\frac{x_t}{x_i^2}\frac{\partial}{\partial{x_t}} & \text{if $s=i$}.
    \end{cases}
\end{equation}
Thus the cocycle $\det(J_{ij})\circ \varphi_j$ maps the distinguished basis
$$\frac{\partial}{\partial{x_0}}\wedge\ldots\wedge\widehat{\frac{\partial}{\partial{x_{j}}}}\wedge\ldots\wedge \frac{\partial} {\partial{x_{n+1}}}$$
to
$$J_{ij}\left(\frac{\partial}{\partial{x_0}}\right)\wedge\ldots\wedge J_{ij}\left(\frac{\partial}{\partial{x_{j-1}}} \right)\wedge J_{ij}\left(\frac{\partial}{\partial{x_{j+1}}} \right)\wedge\ldots\wedge J_{ij}\left(\frac{\partial} {\partial{x_{n+1}}}\right),$$
which is equal to
$$\frac{(-1)^{i+j}}{x_i^{n+2}}  \frac{\partial}{\partial{x_0}}\wedge\ldots\wedge\widehat{\frac{\partial}{\partial{x_{j}}}}\wedge\ldots\wedge\frac{\partial}{\partial{x_{n+1}}}.$$
The Jacobian determinant is $-1/x_i^{n+2}$, but we multiply by $(-1)^{i+j+1}$ after reordering the derivations. See the proof of \cite[Proposition~2.4.3]{MR2093043}. We will use another system of coordinates, which is better suited for our problem. 
\begin{defn}
    The twisted open cover $\{(U_i,\tilde\varphi_i)\}_{i=0}^n$ of $\mathbb P^{n}_k$ is defined as $\tilde\varphi_0=\varphi_0$ and 
    \begin{equation}\label{eq:mckean_coo}
        \tilde\varphi_i([p_0:\ldots: p_{n}])=\left((-1)^i\frac{p_0}{p_i},\ldots,\frac{p_{i-1}}{p_i},\frac{p_{i+1}}{p_i},\ldots,\frac{p_n}{p_i}\right).
\end{equation}
\end{defn}
\begin{prop}\label{prop:distinguished}
    The twisted covering maps $\{(U_i,\tilde\varphi_i)\}_{i=0}^{n+1}$ are a system of Nisnevich coordinates. Moreover, for each pair of indeces $(i,j)$, the morphism $\varphi_i$ determines: the distinguished basis element
    $$(-1)^i\cdot \partial_i:=(-1)^i\bigwedge_{t\neq i}\frac{\partial}{\partial(x_t/x_i)}\in \Gamma\left(U_i,\det {T_{\mathbb P^{n+1}_k}|}_{U_i}\right),$$
    and the transition functions $g_{ij}\colon {T_{\mathbb P^{n+1}_k}|}_{U_j}\rightarrow {T_{\mathbb P^{n+1}_k}|}_{U_i}$ such that
    \begin{equation}\label{eq:detg}
        \det g_{ij}=(-1)^{i+j}\left(\frac{x_i}{x_j}\right)^{n+2}.
    \end{equation}
\end{prop}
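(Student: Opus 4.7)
The plan is to verify the three parts of the proposition---Nisnevich property, distinguished basis element, and transition determinant---in that order, each time reducing to the untwisted computation carried out immediately before the definition of the twisted cover.

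The Nisnevich claim follows by writing $\tilde\varphi_i = S_i\circ\varphi_i$, where $S_i$ is the $k$-linear automorphism of $\mathbb A^{n+1}_k$ that multiplies the first coordinate by $(-1)^i$. Since $\varphi_i$ is an isomorphism and $S_i$ is an étale bijection inducing the identity on residue fields, the composition is étale and induces an isomorphism on residue fields at every point, so $\{(U_i,\tilde\varphi_i)\}_{i=0}^{n+1}$ is a system of Nisnevich coordinates.

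For the distinguished basis I would read the twisted affine coordinates of $\tilde\varphi_i$ directly off \eqref{eq:mckean_coo}: setting $\tilde y_0:=(-1)^i(x_0/x_i)$ and $\tilde y_s:=x_s/x_i$ for $s\neq 0,i$, only the first partial $\partial/\partial\tilde y_0=(-1)^i\,\partial/\partial(x_0/x_i)$ changes relative to the untwisted coordinates; the remaining $\partial/\partial\tilde y_s$ coincide with $\partial/\partial(x_s/x_i)$. Wedging in order extracts exactly one factor of $(-1)^i$, yielding $(-1)^i\partial_i$.

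The transition formula is the only part with computational substance. Using the factorization $\tilde\varphi_i\circ\tilde\varphi_j^{-1}=S_i\circ(\varphi_i\circ\varphi_j^{-1})\circ S_j^{-1}$, the Jacobian determinant of the twisted transition equals $(\det S_i)(\det S_j^{-1})=(-1)^{i+j}$ times the Jacobian determinant of the untwisted transition $\varphi_i\circ\varphi_j^{-1}$, which was already worked out in the paragraph preceding the definition of the twisted cover. Assembling the sign $(-1)^{i+j+1}$ from reordering the derivations so that the missing index in $\det T_{\mathbb P^{n+1}|U_i}$ is $i$ rather than $j$, the explicit sign of the bare Jacobian, and the $(-1)^{i+j}$ contributed by the twist, one obtains \eqref{eq:detg}. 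The main potential pitfall I foresee is purely combinatorial: tracking the several $\pm$ signs and the direction of the cocycle carefully enough that $x_i/x_j$ (and not its reciprocal) ends up in the exponent. This is routine bookkeeping rather than a conceptual obstacle, and I would organize the argument by computing the combinatorial reordering sign and the multiplicative contribution of $S_i,S_j$ separately before combining them.
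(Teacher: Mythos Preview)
The paper does not actually prove this proposition: its entire proof is the one-line citation ``See \cite[Proposition~3.8]{MR4211099}.'' Your direct argument via the factorisation $\tilde\varphi_i=S_i\circ\varphi_i$ is essentially the content of that reference, and your treatment of the Nisnevich claim and of the distinguished basis element $(-1)^i\partial_i$ is correct.

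One caution on the transition--determinant part. Read the statement carefully: it says ``the morphism $\varphi_i$ determines \ldots\ the transition functions $g_{ij}$'' (with $\varphi_i$, not $\tilde\varphi_i$). Formula~\eqref{eq:detg} is thus the \emph{untwisted} transition determinant---precisely the quantity produced in the display immediately before the definition of the twisted cover, namely the bare Jacobian sign $-1$ times the reordering sign $(-1)^{i+j+1}$. Your additional factor $(-1)^{i+j}$ coming from $S_i,S_j^{-1}$ should therefore not be fed into \eqref{eq:detg}; indeed, by your own bookkeeping the three signs you list multiply to $(-1)^{i+j+1}\cdot(-1)\cdot(-1)^{i+j}=+1$, not $(-1)^{i+j}$. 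The r\^ole of the twist is rather the following: passing from the frames with wedge $\partial_i$ to those with wedge $(-1)^i\partial_i$ \emph{absorbs} the sign in \eqref{eq:detg}, so that relative to the twisted distinguished bases the transition determinant becomes the sign-free expression---compare the analogous computation for $X$ culminating in \eqref{eq:det_g}, where the twist in the fibre direction kills the $(-1)^{\alpha+\beta}$ and no sign survives. So your factorisation is the right tool, but you have attributed the $(-1)^{i+j}$ in \eqref{eq:detg} to the wrong source.
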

\begin{proof}
    See \cite[Proposition~3.8]{MR4211099}.
\end{proof}
We will use the coordinates described in Equation~\eqref{eq:mckean_coo} to a natural atlas of $X$.
\begin{defn}\label{defn:my_Nis}
    Let $i,\alpha\in\{0,\ldots,n+1\}$ be such that $\alpha\neq i$. Let $\{\frac{\partial}{\partial x_t}\}_{t\neq i}$ be a trivializing base of $T_{U_i}$. Let $U_\alpha\subset \mathbb P(T_{U_i})$ be the open set corresponding to $\frac{\partial}{\partial x_\alpha}\neq 0$. Thus $\ev^{-1}(U_i)=U_i\times \mathbb P^{n}_k$ is covered by open subsets $U_{i\alpha}:=U_i\times U_\alpha$. The morphism $\tilde\varphi_{i\alpha}\colon U_{i\alpha}\rightarrow\mathbb A^{n+1}_k\times\mathbb A^{n}_k$ is defined as follows
\begin{equation}
    \tilde\varphi_{i\alpha}([p_0:\ldots: p_{n+1}]\times[q_0:\ldots:q_n]):= \varphi_{i}([p_0:\ldots: p_{n+1}])\times \tilde\varphi_{\alpha}([q_0:\ldots:q_n]).
\end{equation}
\end{defn}
Since $\tilde\varphi_{i\alpha}$ is the direct product of two isomorphisms, it is an isomorphism. In particular it gives a system of Nisnevich coordinates. 

\begin{prop}\label{prop:prop3.8}
    The covering maps $\{(U_{i\alpha},\tilde\varphi_{i\alpha})\}$ are Nisnevich coordinates. 
    
    Moreover, $\tilde\varphi_{i\alpha}$ determines a distinguished basis element of $\det {T_{X}|}_{U_i}$ with transition functions $g_{(i\alpha),(j\beta)}\colon {{T_X}|}_{U_{j\beta}}\rightarrow {{T_X}|}_{U_{i\alpha}}$ such that
\begin{equation}\label{eq:det_g}
    \det g_{(i\alpha),(j\beta)}=\left(\frac{x_i}{x_j}\right)^{2}\left(\frac{x_iy_\alpha}{x_jy_\beta}\right)^{n+1}.
\end{equation}
\end{prop}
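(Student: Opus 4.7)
The first assertion is essentially immediate from the construction: each $\tilde\varphi_{i\alpha}$ is the direct product of the isomorphisms $\tilde\varphi_i$ (from Proposition~\ref{prop:distinguished}) and $\tilde\varphi_\alpha$, hence itself an isomorphism from $U_{i\alpha}$ onto $\mathbb{A}^{n+1}_k \times \mathbb{A}^{n}_k$. Being an isomorphism, it is in particular \'etale with trivial residue field extension; combined with the observation that the $U_{i\alpha}$ cover $X$ (since the $U_i$ cover $\mathbb{P}^{n+1}_k$ and for each point the $U_\alpha$ cover the fiber $\mathbb{P}^n_k$), this proves the Nisnevich property.

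For the transition cocycle, the plan is to exploit the projective bundle structure $\ev\colon X \to \mathbb{P}^{n+1}_k$ through the relative tangent exact sequence
$$0 \to T_{X/\mathbb{P}^{n+1}} \to T_X \to \ev^*T_{\mathbb{P}^{n+1}} \to 0,$$
which gives a splitting $\det T_X \cong \det T_{X/\mathbb{P}^{n+1}} \otimes \ev^*\det T_{\mathbb{P}^{n+1}}$. Since $\tilde\varphi_{i\alpha} = \tilde\varphi_i \times \tilde\varphi_\alpha$ is compatible with this product structure, the distinguished basis of $\det T_{X|U_{i\alpha}}$ is the wedge of the two distinguished bases provided by Proposition~\ref{prop:distinguished} on each factor. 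The cocycle of $\ev^*\det T_{\mathbb{P}^{n+1}}$ depends only on the base and equals $(-1)^{i+j}(x_i/x_j)^{n+2}$ by Proposition~\ref{prop:distinguished} applied to $\mathbb{P}^{n+1}_k$. For $\det T_{X/\mathbb{P}^{n+1}}$, the ``naive'' fiber contribution coming from the chart change $U_\alpha \to U_\beta$ is $(-1)^{\alpha+\beta}(y_\alpha/y_\beta)^{n+1}$, again by Proposition~\ref{prop:distinguished} applied to $\mathbb{P}^n_k$.

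The crucial missing piece, and what I expect to be the main obstacle, is the additional twist induced on the fiber coordinates by the change of base trivialization: when passing from $U_i$ to $U_j$, the trivialization of $T_{\mathbb{P}^{n+1}}$ is modified by the Jacobian $J_{ij}$ of Equation~\eqref{eq:trans_J}, which acts as a projective transformation on each fiber of $\ev$ and hence twists the identification of $\mathbb{P}(T_{\mathbb{P}^{n+1},p})$ with $\mathbb{P}^n_k$. A careful analysis of this action should produce an additional factor of $(-1)^{i+j+\alpha+\beta}(x_i/x_j)$ in the cocycle of $\det T_{X/\mathbb{P}^{n+1}}$. Multiplying the three contributions, the alternating signs cancel by the very choice of twisted coordinates in $\tilde\varphi$, and the remaining monomial simplifies to
$$\det g_{(i\alpha)(j\beta)} = \left(\frac{x_i}{x_j}\right)^{2}\left(\frac{x_i y_\alpha}{x_j y_\beta}\right)^{n+1},$$
as desired. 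The bookkeeping of this twist is mechanical but delicate, and is the step I would execute most carefully.
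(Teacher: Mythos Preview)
Your decomposition via the relative tangent sequence is structurally the same as what the paper does: the paper also factors the full Jacobian as $(-1)^{\alpha+\beta}\det J_{ij}\cdot\det(\partial y'_t/\partial y_s)$, which is precisely base-times-fibre. There are two issues, however.

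First, a minor slip: by Definition~\ref{defn:my_Nis} the base factor uses the \emph{standard} chart $\varphi_i$, not the twisted $\tilde\varphi_i$; only the fibre carries the twist, and the distinguished element is $(-1)^\alpha\,\partial_i\wedge\partial_\alpha$ with no $(-1)^i$. Your sign bookkeeping on the base is therefore off from the start.

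Second, and this is the real gap, the ``twist'' you postpone is not a small correction but the entire substance of the computation. Your claimed value $(-1)^{i+j+\alpha+\beta}(x_i/x_j)$ is asserted, not derived; the appearance of $\alpha,\beta$ in that sign has no evident source --- the projective action of $J_{ij}$ on the fibre knows nothing about which affine charts $U_\alpha,U_\beta$ one later restricts to --- and strongly suggests the factor was back-solved from the target formula. The paper does not try to isolate such a twist. Instead it writes out the full affine transition on the fibre coordinates explicitly, observing in particular that $y'_j$ acquires an extra factor $-1/x_i$ from the $J_{ij}$-action while the remaining $y'_s$ scale by $1/y_\alpha$, reads off the resulting determinant $(-1)(1/x_i)^2(1/(x_iy_\alpha))^{n+1}$, and then tracks the signature $(-1)^{\alpha+\beta+1}$ of the permutation that reorders the wedge factors. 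That explicit calculation is unavoidable; the exact-sequence viewpoint reorganises it but does not bypass it.
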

\begin{proof}
    In order to obtain the gluing functions of $X$, we need to combine \eqref{eq:trans_P} and \eqref{eq:trans_J}. A few calculations 
    lead to the following maps of $\mathbb A^{n+1}_k\times \mathbb A^{n}_k$:
    \begin{align}\label{eq:varphi}
        \tilde\varphi_{i\alpha}\circ\tilde\varphi_{j\beta}^{-1}
        ((x_{0},\ldots,\widehat{x}_{j},\ldots,x_{n+1}) &\times
        (y_{0},\ldots,\widehat{y}_{j},\ldots,\widehat{y}_{\beta},
        \ldots,y_{n+1})) \\ \nonumber
        =
        (x_{0}',\ldots,\widehat{x}_{i}',\ldots,x_{n+1}') &\times 
        ((-1)^{\alpha+\beta}
        y_{0}',\ldots,\widehat{y}_{i}',\ldots,\widehat{y}_{\alpha}',
        \ldots,y_{n+1}'),
    \end{align}
    where for $s\neq i,j,\beta$,
    $$x_{s}' =\frac{x_s}{x_i}, \,\,\,\,\,
        x_{\beta}' =\frac{x_\beta}{x_i}, \,\,\,\,\,
        x_{j}' =\frac{1}{x_i},$$
    $$y_{s}' =\frac{y_s}{y_\alpha}, \,\,\,\,\,
        y_{\beta}' =\frac{1}{y_\alpha}, \,\,\,\,\,
        y_{j}' =\frac{-1}{x_i}\frac{y_i}{y_{\alpha}}-\frac{x_\beta}{x_i}\frac{1}{y_{\alpha}}-\sum_{t\neq i,j,\beta}\frac{x_t}{x_i}\frac{y_{t}}{y_{\alpha}}.$$
    The determinant of the Jacobian matrix of \eqref{eq:varphi} is
    $$J_{ij\alpha\beta}:=(-1)^{\alpha+\beta}\det J_{ij}\det\left(\frac{\partial y_t'}{\partial y_s}\right).$$
    Note that in order to compute it, we can take $y_j'=-y_i/(x_iy_\alpha)$ since the linear combination of the other coordinates $y_s'$ does not influence the determinant. Thus
    \begin{align*}
        \det J_{ij}\det\left(\frac{\partial y_t'}{\partial y_s}\right) &= (-1)^{\alpha+\beta}\left(\frac{1}{x_i}\right)^{n+2}\left(\frac{-1}{x_i}\right)(-1)\left(\frac{1}{y_\alpha}\right)^{n+1} \\
         &= (-1)^{\alpha+\beta}\left(\frac{1}{x_i}\right)^2\left(\frac{1}{x_iy_\alpha}\right)^{n+1}.
    \end{align*}
    We have a distinguished basis element of $\det {{T_X}|}_{U_{i\alpha}}$ given by
    $$(-1)^{\alpha}\cdot \partial_i\wedge\partial_\alpha :=
    \bigwedge_{t\neq i}\frac{\partial}{\partial(x_t/x_i)}\wedge
    (-1)^\alpha\bigwedge_{t\neq i,\alpha}\frac{\partial}{\partial(y_t/y_\alpha)}.$$
    After reordering, we see that the cocycles $J_{ij\alpha\beta}\circ\tilde\varphi_{j\beta}$ equal \eqref{eq:det_g} and are compatible with the distinguished basis.
    The reason for the compatibility is that the transition map in \eqref{eq:varphi} acts with a permutation $(j+1,i)$ if $j<i$ (or $(i+1,j)$, if $j>i)$ on the coordinates $\{x_s\}_{s\neq j}$. On the other hand, if $j<i$ and $\beta<\alpha$ it acts with a permutation $(j+1,i)(\beta+1,\alpha)$ on the coordinates $\{y_s\}_{s\neq j,\beta}$. In any case, the signature of \eqref{eq:varphi} is $(-1)^{\alpha+\beta+1}$.
\end{proof}
\begin{lem}\label{lem:local_triv}
    The line bundles $\mathcal{O}_X(1)$ and $\pi^*\mathcal{O}_G(1)$ are locally trivialized over $U_{i\alpha}$, respectively, by
    $$\left(\frac{x_i}{x_0}\right),  \,\,\,\,\, \left(\frac{x_iy_\alpha}{x_0y_1-x_1y_0}\right).$$
    Moreover, the transition functions are, respectively,
    $$\left(\frac{x_i}{x_j}\right),  \,\,\,\,\,\left(\frac{x_iy_\alpha}{x_jy_\beta}\right).$$
\end{lem}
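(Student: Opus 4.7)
My plan is to realize each line bundle as the sheaf associated with a specific Cartier divisor, then read off the trivializing sections as reciprocals of local equations of that divisor. For $\mathcal{O}_X(1)$, I would identify $\mathcal{O}_{\mathbb P^{n+1}_k}(1) \cong \mathcal{O}(H_0)$ with $H_0 = \{x_0 = 0\}$: the local equation of $H_0$ on $U_i$ is $x_0/x_i$, so the reciprocal $x_i/x_0$, viewed as a section of $\mathcal{O}(H_0)$, is regular and nowhere vanishing on $U_i$. Pulling back along $\ev$ gives the trivialization $x_i/x_0$ on $U_{i\alpha}$, and the transition from $(j,\beta)$ to $(i,\alpha)$ is the ratio $(x_i/x_0)\cdot(x_0/x_j) = x_i/x_j$, as claimed.

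For $\pi^*\mathcal{O}_G(1)$ I would use the Plücker embedding to write $\mathcal{O}_G(1) \cong \mathcal{O}(D)$ with $D = \{p_{01} = 0\}$, where $p_{ab}$ denotes the $(a,b)$-Plücker coordinate. The key step is to describe $\pi$ explicitly on $U_{i\alpha}$. A point there is a pair $(p,\ell)$ with $p = [x_0:\cdots:x_{n+1}] \in U_i$ and $\ell$ a line through $p$ whose tangent direction lies in the chart $\{y_\alpha \neq 0\}$ of $\mathbb P(T_p\mathbb P^{n+1}_k)$. Since $x_i \neq 0$, the splitting $k^{n+2} = \langle p \rangle \oplus \{v : v_i = 0\}$ lifts the tangent direction $[y_s]_{s \neq i}$ uniquely to a vector $\tilde y \in k^{n+2}$ with $\tilde y_i = 0$, so $\ell = \mathrm{span}(x,\tilde y)$ and $p_{ab}(\ell) = x_a \tilde y_b - x_b \tilde y_a$. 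Evaluating, $p_{i\alpha}(\ell) = x_i y_\alpha$ is nowhere vanishing on $U_{i\alpha}$, while $p_{01}(\ell) = x_0 y_1 - x_1 y_0$ (reading the expression with the convention $\tilde y_i = 0$ when $i \in \{0,1\}$).

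On the affine chart $\{p_{i\alpha} \neq 0\} \subset G$ the divisor $D$ has local equation $p_{01}/p_{i\alpha}$, so $p_{i\alpha}/p_{01}$ trivializes $\mathcal{O}_G(1)$ there; pulling back via $\pi$ yields the trivialization $x_i y_\alpha/(x_0 y_1 - x_1 y_0)$ of $\pi^*\mathcal{O}_G(1)$ on $U_{i\alpha}$, and the transition $(x_i y_\alpha)/(x_j y_\beta)$ follows at once. The main obstacle I foresee is justifying that the chart-dependent lift $\tilde y_i = 0$ yields well-defined Plücker coordinates as regular functions, rather than merely as projective values; this should follow because the affine normalizations $x_i = 1$ and $y_\alpha = 1$ pin down $\tilde y$ uniquely and thereby select a canonical basis for the $2$-plane $\mathrm{span}(x,\tilde y)$, making each $p_{ab}$ a genuine regular function on $U_{i\alpha}$ that is compatible with the transition functions computed in Proposition~\ref{prop:prop3.8}.
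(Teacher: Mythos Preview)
Your approach is essentially identical to the paper's: both treat $\mathcal{O}_X(1)$ as the pullback of the hyperplane bundle trivialized by $x_i/x_0$, and both handle $\pi^*\mathcal{O}_G(1)$ via the Pl\"ucker embedding together with the normalization $\tilde y_i=0$ (the paper writes the line in the matrix form $\begin{psmallmatrix} x_0 & \cdots & x_{n+1}\\ y_0 & \cdots & y_{n+1}\end{psmallmatrix}$ with $y_i=0$, which is exactly your lift). The paper additionally notes the sign convention when $\alpha<i$, taking $-(P_{01}/P_{\alpha i})$ so that the trivializer is still $x_iy_\alpha/(x_0y_1-x_1y_0)$; you may wish to make that small bookkeeping point explicit.
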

\begin{proof}
    On $\ev^{-1}(U_i)$ (thus, on $U_{i\alpha}$), $\mathcal{O}_X(1)$ is trivialized by the pullback of $(\frac{x_i}{x_0})$, which is the trivializer of $\mathcal{O}_{\mathbb P^{n+1}_k}(1)$ on $U_i$. The induced transition functions on $\mathcal{O}_X(1)$ are those of Equation~\eqref{eq:detg}; see \cite[Proposition~3.9]{MR4211099}.
    In order to trivialize $\pi^*\mathcal{O}_G(1)$, we recall that $G$ is defined as the space of $\mathrm{GL}(2,k)$-orbits of $2\times(n+2)$ matrices. In particular, we may see $X$ as the space of pairs
    \begin{equation*}
     [x_0:\ldots:x_{n+1}]\times\left[
     \begin{pmatrix}
         x_0 & \ldots & x_{n+1} \\
         y_0 & \ldots & y_{n+1}
     \end{pmatrix}
     \right]_{\mathrm{GL}(2,k)}.
    \end{equation*}
    Suppose that $[x_0:\ldots:x_{n+1}]\in U_i$. Using the fact that $x_i\neq 0$ all orbits appearing in the above equation admit a representative with $y_i=0$. In particular, we may think that $G$ is the moduli space of orbits of the form 
    \begin{equation}\label{eq:matrix_form}
        \begin{pmatrix}
            x_0 & \ldots & x_i & \ldots & x_{n+1} \\
            y_0 & \ldots & 0   & \ldots & y_{n+1}
        \end{pmatrix}.
    \end{equation}
    Moreover, the coordinates of $\ev^{-1}(U_i)$ are naturally identified with the projective points $[x_0:\ldots:x_{n+1}]$ and $[y_0:\ldots:0:\ldots:y_{n+1}]$ appearing above.
    
    Let $\Pi\colon G\rightarrow \mathbb P(\wedge^2 k^{n+2})$ be the Pl{\"u}cker embedding. That is, the embedding sending a matrix like in Equation~\eqref{eq:matrix_form} to the point $[\ldots:P_{ts}:\ldots]$ where $t<s$ and $P_{ts}=x_ty_s-x_sy_t$. Suppose that $i<\alpha$. Note that the composition $\Pi\circ\pi$ defines an isomorphism between $U_{i\alpha}$ and $\pi(G)\cap \{P_{i\alpha}\neq 0\}$. 

    Since $\pi$ is surjective and $\Pi$ is not degenerate, the map $U_{i\alpha}\rightarrow \mathbb A^1_k$ defined by $(P_{01}/P_{i\alpha})$ is regular and nonzero. 
    
    As above, 
    $\mathcal{O}_{\mathbb P(\wedge^2 k^{n+2})}(1)$ is trivialized over $\{P_{i\alpha}\neq 0\}$ by $(P_{i\alpha}/P_{01})$. Thus by pullback we get that 
    $\pi^*\mathcal{O}_G(1)=\pi^* \Pi^*\mathcal{O}_{\mathbb P(\wedge^2 k^{n+2})}(1)$ is trivialized by $x_iy_\alpha/(x_0y_1-x_1y_0)$ with transition functions $(x_iy_\alpha/x_jy_\beta)$. If $\alpha<i$, we take without loss of generality the map $-(P_{01}/P_{\alpha i})$ and repeat the same argument.
\end{proof}

\section{Osculating lines}\label{section:Osculating}
Let $Y\subset \mathbb P^{n+1}_k$ be a smooth hypersurface, and $\mathcal{L}\subset \mathbb P^{n+1}_k$ be a line. We say that $\mathcal{L}$ and $Y$ have $t$-contact with each other if $\mathcal{L}\cap Y$ contains a divisor of $\mathcal{L}$ of the form $tp$ for some point $p\in \mathcal{L}$.
 
We say that $\mathcal{L}$ is osculating to $Y$ if they are $(n+1)$-contact. 

Lines with $(t+1)$-contact with $Y$ are parameterized by the zero locus of a section of the $t^\mathrm{th}$ bundle of principal parts, see \cite[Section 5]{MR0626480}.
\begin{defn}
    Let $d$ be an integer. For every $t\ge0$, the $t^\mathrm{th}$ \emph{bundle of principal parts} of $\mathcal{O}_X(d)$ is
    \begin{equation}
        \Ptd:=q_*(p^*\mathcal{O}_{X}(d)\otimes \mathcal{O}_{X\times_G X}/{\mathcal{I}_\Delta^{t+1}}),
    \end{equation}
    where $p$ and $q$ are the two projections $X\times_G X\rightarrow X$.
\end{defn}
There exists a canonical differential morphism of sheaves $\partial^t\colon \mathcal{O}_X(d) \rightarrow \Ptd$ (see \cite[D{\'e}finition (16.3.6)]{MR238860}). For any section $s$ of $\mathcal{O}_X(d)$, the section $\partial^t s$ of $\Ptd$ is called principal part of $s$. Geometrically, $\partial^t$ maps a local section defined at a point $x\in X$ to its first $t$ Hasse--Schmidt derivatives relative to $\pi$. This is because $(\partial^t,\Ptd)$ represents the functor sending a line bundle $M$ to the set of $\mathcal{O}_G$-linear $t$-derivations $D\colon \mathcal{O}_X(d)\rightarrow M$ relative to $\pi$ \cite[Proposition (16.8.4)]{MR238860}. See \cite{MR2349665,MR4626882} for a more explicit approach.
Finally the exact sequence
$$
0\rightarrow {\mathcal{I}^{t+1}_\Delta}/{\mathcal{I}^{t+2}_\Delta}
\rightarrow \mathcal{O}_{X\times_G X}/\mathcal{I}_\Delta^{t+2} \rightarrow \mathcal{O}_{X\times_G X}/\mathcal{I}_\Delta^{t+1}\rightarrow 0
$$
induces the exact sequence
\begin{equation}\label{eq:principal}
    0\rightarrow \omega_{\pi}^{\otimes t+1}\otimes \mathcal{O}_X(d)
\rightarrow \P^{t+1}(d) \rightarrow \P^{t}(d)\rightarrow 0.
\end{equation}

Finally, we define the following vector bundle.

\begin{defn}
    We denote by $\V$ the vector bundle on $X$ given by $\Pnd\oplus \mathcal{O}_X(1)^{\oplus n}$.
\end{defn}

\section{Relative orientation}\label{section:Relative}
The rank of $\V$ is clearly $2n+1$. Moreover, using the property of the determinant of a vector bundle, it is easy to see that
\begin{equation}\label{eq:det_V}
    \det\V=\mathcal{O}_X(1)^{\otimes n}\otimes\mathcal{O}_X(d)^{\otimes n+1}\otimes\omega_\pi^{n(n+1)/2}.
\end{equation}
Indeed, $\det\V=\det(\Pnd)\otimes\mathcal{O}_{X}(n)$. Using \eqref{eq:principal} and an easy induction,
\begin{align*}
    \det(\Pnd)  & =  \det(\P^{n-1}(d))
    \otimes\omega_\pi^{\otimes n}\otimes\mathcal{O}_{X}(d)\\
                & = \omega_\pi^{\otimes n(n+1)/2}\otimes\mathcal{O}_{X}((n+1)d).
\end{align*}
We use this equation for trivializing $\det\V$.
\begin{prop}\label{prop:prop3.9}
    The line bundle $\det\V$ is trivialized on $U_{i\alpha}$ by the distinguished element
    \begin{equation}\label{eq:prop3.9-1}
        \left(\frac{x_i}{x_0}\right)^{n+d(n+1)}\left(\frac{x_i}{x_0}\right)^{-n(n+1)}\left(\frac{x_iy_\alpha}{x_0y_1-x_1y_0}\right)^{n(n+1)/2},
    \end{equation}
    with transition functions 
    $\det{{\V}|}_{U_{j\beta}}\rightarrow\det{{\V}|}_{U_{i\alpha}}$ 
    being
    \begin{equation}\label{eq:prop3.9-2}
        \left(\frac{x_i}{x_j}\right)^{d(n+1)-n^2}\left(\frac{x_iy_\alpha}{x_jy_\beta}\right)^{n(n+1)/2}.
    \end{equation}
\end{prop}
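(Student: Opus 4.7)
The plan is to reduce the statement to a direct calculation using equation~\eqref{eq:det_V}, Lemma~\ref{lem:local_triv}, and equation~\eqref{eq:pi_star_G}. Since
$$\det\V = \mathcal{O}_X(1)^{\otimes n}\otimes\mathcal{O}_X(d)^{\otimes n+1}\otimes\omega_\pi^{\otimes n(n+1)/2},$$
it suffices to trivialize each factor separately on $U_{i\alpha}$ and then multiply the trivializing sections and the associated cocycles. The key input is that Lemma~\ref{lem:local_triv} has already given explicit trivializations of $\mathcal{O}_X(1)$ and $\pi^*\mathcal{O}_G(1)$ on every $U_{i\alpha}$, and equation~\eqref{eq:pi_star_G} lets me rewrite $\omega_\pi = \pi^*\mathcal{O}_G(1)\otimes\mathcal{O}_X(-2)$.

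First, I would treat the contribution of $\mathcal{O}_X(1)^{\otimes n}\otimes\mathcal{O}_X(d)^{\otimes n+1}=\mathcal{O}_X(n+d(n+1))$. By Lemma~\ref{lem:local_triv}, this line bundle is trivialized by $(x_i/x_0)^{n+d(n+1)}$ with transition $(x_i/x_j)^{n+d(n+1)}$, accounting for the first exponent in both \eqref{eq:prop3.9-1} and \eqref{eq:prop3.9-2}.

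Next, using \eqref{eq:pi_star_G} and Lemma~\ref{lem:local_triv}, I would trivialize $\omega_\pi$ on $U_{i\alpha}$ by
$$\left(\frac{x_iy_\alpha}{x_0y_1-x_1y_0}\right)\cdot\left(\frac{x_0}{x_i}\right)^2,$$
with transition $(x_iy_\alpha/(x_jy_\beta))\cdot(x_j/x_i)^2$. Raising to the $n(n+1)/2$-th power and multiplying with the previous step produces exactly the distinguished element \eqref{eq:prop3.9-1}. For the cocycle, collecting the exponents of $x_i/x_j$ from the two factors gives $n+d(n+1)-n(n+1)=d(n+1)-n^2$, while the exponent of $x_iy_\alpha/(x_jy_\beta)$ is manifestly $n(n+1)/2$, matching \eqref{eq:prop3.9-2}.

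The computation is essentially bookkeeping; the only point to check is that no additional sign enters from the twisted Nisnevich atlas $\{(U_{i\alpha},\tilde\varphi_{i\alpha})\}$ of Proposition~\ref{prop:prop3.8}. Since the sign conventions of that atlas affect the distinguished element of $\det T_X$ rather than the trivializations of $\mathcal{O}_X(1)$ or $\pi^*\mathcal{O}_G(1)$ (whose trivializers in Lemma~\ref{lem:local_triv} are sign-free), no extra sign appears in multiplying the cocycles. Thus I do not foresee a genuine obstacle beyond organized exponent tracking.
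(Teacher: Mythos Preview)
Your proposal is correct and follows essentially the same argument as the paper: use equation~\eqref{eq:det_V} to factor $\det\V$, trivialize $\omega_\pi$ via \eqref{eq:pi_star_G} and Lemma~\ref{lem:local_triv} as $(x_i/x_0)^{-2}(x_iy_\alpha/(x_0y_1-x_1y_0))$, and then multiply the pieces. Your explicit exponent bookkeeping $n+d(n+1)-n(n+1)=d(n+1)-n^2$ and your remark about signs are more detailed than the paper's terse version, but the structure is identical.
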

\begin{proof}
    By Equation~\eqref{eq:det_V}, we need only the local trivializations of $\omega_\pi$ and $\mathcal{O}_X(1)$. They are easily deduced from Lemma~\ref{lem:local_triv} and Equation~\eqref{eq:pi_star_G}. In particular, ${\omega_{\pi}|}_{U_{i\alpha}}$ is trivialized by 
    $$\left(\frac{x_i}{x_0}\right)^{-2}\left(\frac{x_iy_\alpha}{x_0y_1-x_1y_0}\right).$$
    Finally,  Equations~\eqref{eq:prop3.9-1} and \eqref{eq:prop3.9-2} follow from \eqref{eq:det_V}.
\end{proof}

\subsection{Relatively orientable case}
The following proposition gives us the conditions on $n$ and $d$ such that $\V$ is relatively orientable.
\begin{prop}\label{prop:conditionsnd}
    The vector bundle $\V$ is relatively orientable if and only if the following conditions are satisfied:
    \begin{itemize}
        \item $n\equiv 2 \,\mathrm{mod} \,4$,
        \item $d\equiv 0 \,\mathrm{mod} \,2$.
    \end{itemize}
\end{prop}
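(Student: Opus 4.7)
The plan is to work directly in the Picard group of $X$. By definition, $\V$ is relatively orientable if and only if the line bundle $\mathrm{Hom}(\det T_X,\det\V)\cong\omega_X\otimes\det\V$ admits a square root in $\mathrm{Pic}(X)$, so the proposition reduces to characterizing when this class is divisible by $2$.

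First I would identify $\mathrm{Pic}(X)$. Since $(X,\ev)$ is the projective bundle $\mathbb P(T_{\mathbb P^{n+1}_k})\to\mathbb P^{n+1}_k$, the projective bundle formula gives $\mathrm{Pic}(X)\cong\mathbb Z\oplus\mathbb Z$. I claim that $\{\mathcal{O}_X(1),\,\pi^*\mathcal{O}_G(1)\}$ is a basis: $\mathcal{O}_X(1)=\ev^*\mathcal{O}_{\mathbb P^{n+1}}(1)$ generates the pullback summand, while $\pi^*\mathcal{O}_G(1)$ has degree one on each $\ev$-fiber, because on such a fiber $Z\cong\mathbb P^n_{k(p)}$ the class $\mathcal{O}_X(1)|_Z$ is trivial, and by \eqref{eq:pi_star_G} together with \eqref{eq:omega_Z} one has $\pi^*\mathcal{O}_G(1)|_Z=\omega_{\pi|Z}=\mathcal{O}_{\mathbb P^n}(1)$.

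Next I would express $\omega_X$ and $\det\V$ in this basis. Writing $(a,b)$ as shorthand for $\mathcal{O}_X(1)^{\otimes a}\otimes(\pi^*\mathcal{O}_G(1))^{\otimes b}$: equation \eqref{eq:pi_star_G} yields $\omega_\pi\leftrightarrow(-2,1)$ and \eqref{eq:omega_G} yields $\pi^*\omega_G\leftrightarrow(0,-n-2)$, so $\omega_X=\omega_\pi\otimes\pi^*\omega_G\leftrightarrow(-2,-n-1)$. The formula \eqref{eq:det_V} gives $\det\V\leftrightarrow(d(n+1)-n^2,\,n(n+1)/2)$. Adding,
\[\omega_X\otimes\det\V\;\leftrightarrow\;\bigl(d(n+1)-n^2-2,\;(n-2)(n+1)/2\bigr).\]

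Since $\mathrm{Pic}(X)\cong\mathbb Z^2$ is torsion-free, this class is a square iff both coordinates are even. The second coordinate $(n-2)(n+1)/2$ is even iff $(n-2)(n+1)\equiv 0\pmod 4$, which forces $n\equiv 2\pmod 4$ or $n\equiv 3\pmod 4$. The first coordinate is congruent to $d(n+1)+n\pmod 2$: for $n\equiv 3\pmod 4$ it equals $1$, excluding that case; for $n\equiv 2\pmod 4$ it reduces to $d\pmod 2$, demanding $d$ even. These are exactly the two stated congruences, establishing both directions of the equivalence. The only subtlety in the argument is the identification of the basis of $\mathrm{Pic}(X)$; everything after is elementary bookkeeping of exponents using the relations already collected in the paper.
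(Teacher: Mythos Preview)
Your argument is correct and follows essentially the same route as the paper: both compute the class of $\omega_X\otimes\det\V$ in $\mathrm{Pic}(X)\cong\mathbb Z^2$ and determine when it is divisible by $2$. The only cosmetic difference is that the paper works in the basis $\{\omega_\pi,\mathcal{O}_X(1)\}$ (obtaining the exponents $(n+1)(n/2-1)$ and $(n+1)(d-1)-3$ and invoking a small parity lemma), whereas you work in the equivalent basis $\{\mathcal{O}_X(1),\pi^*\mathcal{O}_G(1)\}$ and carry out the mod-$4$ casework directly.
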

We prove this proposition after the following two simple lemmas.
\begin{lem}\label{lem:cong23}
    The following conditions are equivalent:
    \begin{enumerate}
        \item $n\equiv 2 \,\mathrm{mod} \,4$, or $n\equiv 3 \,\mathrm{mod} \,4$.
        \item $\sum_{t=0}^n t \equiv (n+1) \,\mathrm{mod} \,2$.
    \end{enumerate}
\end{lem}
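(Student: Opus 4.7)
The plan is to reduce the lemma to a routine case analysis modulo $4$. First, I would rewrite $\sum_{t=0}^{n} t$ in closed form as $n(n+1)/2$, so that condition (2) becomes the arithmetic statement
\begin{equation*}
    \frac{n(n+1)}{2} \equiv n+1 \pmod{2}.
\end{equation*}
Subtracting the right-hand side from the left, this is equivalent to $(n+1)(n-2)/2$ being even, i.e.\ to the divisibility
\begin{equation*}
    (n+1)(n-2) \equiv 0 \pmod{4}.
\end{equation*}

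Next I would check the four residue classes of $n$ modulo $4$. For $n\equiv 0$, the factor $(n+1)(n-2)\equiv 1\cdot 2 = 2 \pmod 4$; for $n\equiv 1$, we get $2\cdot(-1) \equiv 2 \pmod 4$; for $n\equiv 2$, we get $3\cdot 0 \equiv 0 \pmod 4$; for $n\equiv 3$, we get $0\cdot 1 \equiv 0 \pmod 4$. Thus $(n+1)(n-2)\equiv 0 \pmod 4$ exactly when $n\equiv 2$ or $n\equiv 3 \pmod 4$, which is condition (1).

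There is no genuine obstacle here; the only thing to be careful about is that the equality $\sum_{t=0}^n t = n(n+1)/2$ makes the divisions by $2$ in the manipulation above legitimate operations among integers, so that the congruence $(n+1)(n-2)/2 \equiv 0 \pmod 2$ is really equivalent to $(n+1)(n-2) \equiv 0 \pmod 4$. Once this is observed, the case analysis finishes the proof in a few lines.
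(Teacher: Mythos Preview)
Your argument is correct and, like the paper's, reduces the question to a case check of $n$ modulo $4$. The only cosmetic difference is that you justify this reduction by passing through the closed form $\sum_{t=0}^n t = n(n+1)/2$ and the derived congruence $(n+1)(n-2)\equiv 0 \pmod 4$, whereas the paper observes directly that any four consecutive integers have even sum, so the parity of $\sum_{t=0}^n t$ depends only on $n \bmod 4$; both routes then finish by inspecting $n\in\{0,1,2,3\}$.
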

\begin{proof}
    Since the sum of four consecutive integers is even, we may take the remainder of $n$ modulo $4$. By a direct computation, we see that $n\in\{2,3\}$ satisfies the condition, but $n\in\{0,1\}$ does not.
\end{proof}
\begin{lem}
    Let $n\ge0$ and $d>0$ be integers. Then
    \begin{equation}\label{eq:Hom}
        \mathrm{Hom}(\det(T_{X}),\det(\V)) = \omega_{\pi}^{\otimes (n+1)(n/2-1)}
        \otimes\mathcal{O}_{X}((n+1)(d-1)-3).
    \end{equation}
\end{lem}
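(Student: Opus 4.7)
The plan is to compute $\det T_X$ first, and then combine with Equation~\eqref{eq:det_V} to identify $\mathrm{Hom}(\det T_X,\det\V)=\det T_X^\vee\otimes\det\V$. Since $\pi\colon X\to G$ is a smooth $\mathbb P^1$-bundle, I would use its relative tangent exact sequence
$$0\to T_\pi\to T_X\to \pi^*T_G\to 0,$$
which gives $\det T_X=\omega_\pi^{-1}\otimes\pi^*\omega_G^{-1}$. From Equation~\eqref{eq:omega_G} we have $\pi^*\omega_G^{-1}=\pi^*\mathcal{O}_G(n+2)$, and applying Equation~\eqref{eq:pi_star_G} once converts this into data on $X$:
$$\pi^*\mathcal{O}_G(n+2)=\omega_\pi^{\otimes(n+2)}\otimes\mathcal{O}_X(2n+4).$$
Hence $\det T_X=\omega_\pi^{\otimes(n+1)}\otimes\mathcal{O}_X(2n+4)$.

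Next I would dualize and tensor with the expression for $\det\V$ given by Equation~\eqref{eq:det_V}, obtaining
$$\mathrm{Hom}(\det T_X,\det\V)=\omega_\pi^{\otimes(n(n+1)/2-(n+1))}\otimes\mathcal{O}_X\bigl(n+d(n+1)-2n-4\bigr).$$
The claim then reduces to two arithmetic identities: $n(n+1)/2-(n+1)=(n+1)(n/2-1)$ and $d(n+1)+n-2n-4=(n+1)(d-1)-3$, both of which are immediate.

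There is no real obstacle here; the statement is essentially bookkeeping once the relative tangent sequence of $\pi$ has been invoked. The only point that needs a little care is making sure that the exponent $n(n+1)/2$ coming from the filtration used to produce Equation~\eqref{eq:det_V} is compatible, after subtracting $n+1$, with the factorization $(n+1)(n/2-1)$; this is why the parity condition on $n$ imposed in Lemma~\ref{lem:cong23} (and later in Proposition~\ref{prop:conditionsnd}) enters the picture, but at the level of the present identity no parity hypothesis is needed.
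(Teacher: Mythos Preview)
Your argument is correct and essentially identical to the paper's: both compute $\det T_X$ (equivalently $\omega_X$) via the relative tangent/cotangent sequence of the smooth morphism $\pi$, apply Equations~\eqref{eq:omega_G} and \eqref{eq:pi_star_G}, and then tensor with \eqref{eq:det_V} to reach the stated identity. The only cosmetic difference is that the paper phrases the first step in terms of $\omega_X=\omega_\pi\otimes\pi^*\omega_G$ rather than $\det T_X=\omega_\pi^{-1}\otimes\pi^*\omega_G^{-1}$.
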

\begin{proof}
    Since $\pi$ is a smooth map, the exact sequence \cite[Proposition II.8.11]{MR0463157} is exact also on the left, thus 
    $$\omega_{X} = \omega_\pi\otimes\pi^*\omega_G.$$
    By Equations~\eqref{eq:omega_G} and \eqref{eq:pi_star_G} it follows:
    \begin{equation*}
        \omega_{X} =
        \omega_\pi\otimes
        (\omega_\pi\otimes\mathcal{O}_{X}(2))^{\otimes -n-2}
        =\omega_\pi^{\otimes -n-1}\otimes\mathcal{O}_{X}(-2n-4).
    \end{equation*}
    On the other hand, using Equation~\eqref{eq:det_V}
    \begin{align*}
        \mathrm{Hom}(\det(T_{X}),\det(\V)) & =  \det(T_{X})^\vee\otimes \mathcal{O}_{X}(n+d(n+1))\otimes\omega_\pi^{\otimes n(n+1)/2}\\
        & =  \omega_{\pi}^{\otimes n(n+1)/2-n-1}\otimes\mathcal{O}_{X}(n+d(n+1)-2n-4)\\
        & =  \omega_{\pi}^{\otimes n(n+1)/2-n-1}\otimes\mathcal{O}_{X}((n+1)(d-1)-3).
    \end{align*}
    as stated in Equation~\eqref{eq:Hom}.
\end{proof}
Finally, we can proceed with the proof of the proposition.
\begin{proof}[Proof of Proposition \ref{prop:conditionsnd}]
    By Equation~\eqref{eq:Hom}, $\V$ is relatively orientable if and only if $$\omega_{\pi}^{\otimes (n+1)(n/2-1)}\otimes\mathcal{O}_{X}((n+1)(d-1)-3)$$
    is the square of a line bundle. Since $\omega_\pi$ and $\mathcal{O}_{X}(1)$ generate $\mathrm{Pic}(X)$ (for example, by \eqref{eq:pi_star_G}), we need that $(n+1)(n/2-1)$ and $(n+1)(d-1)-3$ to be even.
    By Lemma~\ref{lem:cong23}, it is easy to see that it must be $d$ even and $n\equiv 2 \,\mathrm{mod} \,4$.
\end{proof}
So, we know exactly when $\V$ is relatively orientable. In particular, there is a well-defined Euler class. Now, we define explicitly a relative orientation and we will prove that it is compatible with the coordinates given in Definition~\ref{defn:my_Nis}. 
For each open set $U_{i\alpha}$, let us consider the following map $\lambda_{i\alpha}\colon\det 
{{T_X}|}_{U_{i\alpha}}\rightarrow\det
{{\V}|}_{U_{i\alpha}}$ where
$$\lambda_{i\alpha}((-1)^{\alpha}\cdot\partial_i\wedge\partial_\alpha)=\left(\frac{x_i}{x_0}\right)^{n+d(n+1)}\left(\frac{x_i}{x_0}\right)^{-n(n+1)}
\left(\frac{x_iy_\alpha}{x_0y_1-x_1y_0}\right)^{n(n+1)/2}.$$
Combining Proposition~\ref{prop:prop3.8} and \ref{prop:prop3.9}, we see that
\begin{equation}\label{eq:lambda_trans}
    \lambda_{i\alpha}((-1)^{\alpha}\cdot\partial_i\wedge\partial_\alpha)
    =\left(\frac{x_i}{x_j}\right)^{N}
    \left(\frac{x_i}{x_j}\right)^{-2M}
    \left(\frac{x_iy_\alpha}{x_jy_\beta}\right)^{M}\lambda_{j\beta}((-1)^{\alpha}\cdot\partial_i\wedge\partial_\alpha)
\end{equation}
where we denoted
$$N := (n + 1)(d - 1) - 3,  \,\,\,\,\, M:= (n + 1)(n/2 - 1).$$
It follows that there exists a well-defined morphism
$$\psi\colon\omega_{\pi}^{\otimes M}\otimes\mathcal{O}_X(N)\rightarrow\mathrm{Hom}(\det(T_{X}),\det(\V)),$$
such that
\begin{equation}\label{eq:psi_eq_lambda}
    {\psi|}_{U_{i\alpha}}\left(
    \left(\frac{x_i}{x_0}\right)^{-2M}
    \left(\frac{x_iy_\alpha}{x_0y_1-x_1y_0}\right)^{M}
    \left(\frac{x_i}{x_0}\right)^{N}\right)=\lambda_{i\alpha}.
\end{equation}
We conclude this section by proving that $\psi$ induces a trivialization compatible with the twisted coordinates, as in Definition~\eqref{defn:compatible}.
\begin{lem}\label{lem:Rel_Ori_case}
    Let $n$ and $d$ be positive integers such that $n\equiv 2 \,\mathrm{mod} \,4$, $d\equiv 0 \,\mathrm{mod} \,2$.
    The local trivializations 
    $$\left(\frac{x_i}{x_0}\right)^{n+d(n+1)}
    \left(\frac{x_i}{x_0}\right)^{-n(n+1)}
    \left(\frac{x_iy_\alpha}{x_0y_1-x_1y_0}\right)^{n(n+1)/2}$$
    of $\det \V$ are compatible with the Nisnevich coordinates $\{(U_{i\alpha},\tilde\varphi_{i\alpha})\}$ and the relative orientation $(\omega_{\pi}^{\otimes M/2}\otimes\mathcal{O}_X(N/2),\psi)$.
\end{lem}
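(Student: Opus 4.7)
The plan is to verify compatibility in the sense of Definition~\ref{defn:compatible} directly, by exhibiting a local square root (in $L$) of the element of $\omega_{\pi}^{\otimes M}\otimes\mathcal{O}_X(N)$ that $\psi$ sends to the distinguished map $\lambda_{i\alpha}$ of \eqref{eq:psi_eq_lambda}. Before doing so, I would record that the hypotheses force both $M=(n+1)(n/2-1)$ and $N=(n+1)(d-1)-3$ to be even: writing $n=4m+2$ one has $n/2-1=2m$, so $M$ is even; and $n+1$ and $d-1$ are both odd, making $(n+1)(d-1)$ odd and hence $N$ even. Thus $L:=\omega_{\pi}^{\otimes M/2}\otimes\mathcal{O}_X(N/2)$ is a genuine line bundle rather than a formal half-power.

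Next I would write down an explicit local generator of $L$ on $U_{i\alpha}$. By Lemma~\ref{lem:local_triv}, $\mathcal{O}_X(1)$ and $\pi^{*}\mathcal{O}_G(1)$ are generated there by $(x_i/x_0)$ and $(x_iy_\alpha/(x_0y_1-x_1y_0))$ respectively; combining these with the identity $\omega_\pi=\pi^{*}\mathcal{O}_G(1)\otimes\mathcal{O}_X(-2)$, which follows from \eqref{eq:pi_star_G}, produces the local section
$$\mu_{i\alpha}:=\left(\frac{x_iy_\alpha}{x_0y_1-x_1y_0}\right)^{M/2}\left(\frac{x_i}{x_0}\right)^{-M+N/2}\in\Gamma(U_{i\alpha},L),$$
which trivializes $L$ on $U_{i\alpha}$.

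Squaring yields
$$\mu_{i\alpha}^{\otimes 2}=\left(\frac{x_i}{x_0}\right)^{-2M}\left(\frac{x_iy_\alpha}{x_0y_1-x_1y_0}\right)^{M}\left(\frac{x_i}{x_0}\right)^{N},$$
which is precisely the distinguished section appearing on the left-hand side of \eqref{eq:psi_eq_lambda}; hence $\psi(\mu_{i\alpha}^{\otimes 2})=\lambda_{i\alpha}$, which is exactly the compatibility requirement. As a consistency check, one can square the transition cocycle for $L$ computed from Propositions~\ref{prop:prop3.8} and~\ref{prop:prop3.9} and verify that the result agrees with the cocycle of $\mathrm{Hom}(\det T_X,\det\V)$ already visible in \eqref{eq:lambda_trans}. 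I do not foresee any conceptual obstacle here; the only care required is accurate bookkeeping of the exponents of the two standard local generators and a final verification that the parity conditions are precisely what allows the half-exponents to exist.
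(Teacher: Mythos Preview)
Your proposal is correct and follows essentially the same approach as the paper: both arguments observe that the parity hypotheses make $M$ and $N$ even, then exhibit the explicit local section $(\tfrac{x_i}{x_0})^{-M}(\tfrac{x_iy_\alpha}{x_0y_1-x_1y_0})^{M/2}(\tfrac{x_i}{x_0})^{N/2}$ of $L$ whose tensor square is, via \eqref{eq:psi_eq_lambda}, mapped by $\psi$ to $\lambda_{i\alpha}$. Your version is slightly more explicit in justifying the parities and in framing $\mu_{i\alpha}$ as a trivializing section of $L$, but the content is identical.
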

\begin{proof}
    As we saw in Proposition~\ref{prop:conditionsnd}, $N$ and $M$ are even, so it makes sense to consider $N/2$ and $M/2$. By construction, $\lambda_{i\alpha}$ maps the distinguished basis of the anticanonical bundle to a distinguished basis of $\det\V$. Finally, Equation~\eqref{eq:psi_eq_lambda} implies that 
    $${\psi|}_{U_{i\alpha}}\left(\left(\left(\frac{x_i}{x_0}\right)^{-M}\left(\frac{x_iy_\alpha}{x_0y_1-x_1y_0}\right)^{M/2}\left(\frac{x_i}{x_0}\right)^{N/2}\right)^{\otimes 2}\right)
    =\lambda_{i\alpha}.$$
    Hence, $\lambda_{i\alpha}$ is a square.
\end{proof}

\subsection{Non-relatively orientable case}\label{subs:non_rel_ori}
Let us assume that $n\equiv 2 \,\mathrm{mod} \,4$, and that $d$ is odd. So $N$ is odd, and $\V$ cannot be relatively orientable. We show that, in this case, $\V$ is relatively orientable relative to the divisor $D=\ev^*(\{x_0=0\})$. We adopt the same approach of \cite[Lemma 3.11]{MR4211099}.

Since $\mathcal{O}_X(D)=\mathcal{O}_X(1)$ and $M$ and $N+1$ are even, by Equation~\eqref{eq:Hom} we have
$$\mathrm{Hom}(\det(T_{X}),\det(\V))\otimes\mathcal{O}_X(D) = 
\left( \omega_{\pi}^{\otimes M/2} \otimes\mathcal{O}_{X}\left(\frac{N+1}{2}\right)\right)^{\otimes 2},$$
that is, it is a tensor square. As $\mathcal{O}_X(D)$ is locally trivialized by $(\frac{x_i}{x_0})$, we may define on $U_{i\alpha}$

\begin{equation}
    {\tilde{\psi}|}_{U_{i\alpha}}
    \left(\left(\frac{x_i}{x_0}\right)^{-2M}
    \left(\frac{x_iy_\alpha}{x_0y_1-x_1y_0}\right)^{M}
    \left(\frac{x_i}{x_0}\right)^{N+1}\right)=\lambda_{i\alpha}.
\end{equation}

\begin{lem}
    Let $n$ and $d$ be positive integers such that $n\equiv 2 \,\mathrm{mod} \,4$, $d\equiv 1 \,\mathrm{mod} \,2$.
    The local trivializations 
    $$\left(\frac{x_i}{x_0}\right)^{n+d(n+1)}
    \left(\frac{x_i}{x_0}\right)^{-n(n+1)}
    \left(\frac{x_iy_\alpha}{x_0y_1-x_1y_0}\right)^{n(n+1)/2}$$
    of $\det \V$ are compatible with the Nisnevich coordinates $\{(U_{i\alpha},\tilde\varphi_{i\alpha})\}$ and the relative orientation $(\omega_{\pi}^{\otimes M/2}\otimes\mathcal{O}_X((N+1)/2),\tilde\psi)$ relative to the divisor $D$.
\end{lem}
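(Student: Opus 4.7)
The plan is to mimic the proof of Lemma~\ref{lem:Rel_Ori_case} essentially verbatim, with the twist that the divisor $D$ contributes exactly the missing factor needed to convert the odd exponent $N$ into the even $N+1$. Since $\mathcal{O}_X(D)=\mathcal{O}_X(1)$ is locally trivialized by $(x_i/x_0)$ on $U_{i\alpha}$, tensoring with $1_D$ increases the exponent of $(x_i/x_0)$ in the local expression associated with $\tilde\psi^{-1}(\lambda_{i\alpha})$ by exactly one.

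First I would verify that the candidate orientation line bundle $\omega_{\pi}^{\otimes M/2}\otimes\mathcal{O}_X((N+1)/2)$ is well-defined under the present hypotheses. Writing $n=4k+2$, one has $n/2-1=2k$, so $M=(n+1)(n/2-1)$ is even; and with $d$ odd, $d-1$ is even, so $N=(n+1)(d-1)-3$ is odd, which means $N+1$ is even. Hence both exponents $M/2$ and $(N+1)/2$ make sense, matching the preliminary observation in Subsection~\ref{subs:non_rel_ori}.

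Next I would unwind the definition of $\tilde\psi$. By construction $\tilde\psi_{|U_{i\alpha}}$ sends the local section
$$\bigl(\tfrac{x_i}{x_0}\bigr)^{-2M}\bigl(\tfrac{x_iy_\alpha}{x_0y_1-x_1y_0}\bigr)^{M}\bigl(\tfrac{x_i}{x_0}\bigr)^{N+1}$$
of $L^{\otimes 2}$ to $\lambda_{i\alpha}\otimes 1_D$, and because $M$ and $N+1$ are both even this expression factors as the tensor square of
$$\bigl(\tfrac{x_i}{x_0}\bigr)^{-M}\bigl(\tfrac{x_iy_\alpha}{x_0y_1-x_1y_0}\bigr)^{M/2}\bigl(\tfrac{x_i}{x_0}\bigr)^{(N+1)/2}\in\Gamma(U_{i\alpha},L).$$
This is precisely the compatibility condition with respect to $D$ formulated after Lemma~\ref{lem:LV21}.

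The main step to watch is not computational but a matter of bookkeeping: one must verify that $\tilde\psi$ is genuinely globally defined, which requires checking that the cocycle identity of Equation~\eqref{eq:lambda_trans}, once $N$ is replaced by $N+1$, matches the transition functions of $\mathcal{O}_X(D)=\mathcal{O}_X(1)$ read off from Lemma~\ref{lem:local_triv}. This comparison is immediate after unwinding the trivializations, so no genuinely new ingredient beyond the argument of Lemma~\ref{lem:Rel_Ori_case} is required; the odd case is really the relatively oriented case with the single extra $(x_i/x_0)$ absorbed into $1_D$.
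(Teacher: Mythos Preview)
Your proposal is correct and follows essentially the same approach as the paper: check that $M$ and $N+1$ are even under the hypotheses, use Equation~\eqref{eq:lambda_trans} together with the extra factor $(x_i/x_0)$ coming from $1_D$ to see that the transition functions of $\lambda_{i\alpha}\otimes 1_D$ are tensor squares (hence $\tilde\psi$ is globally well-defined), and conclude that the distinguished element is a square of a section of $L$. The only cosmetic slip is that in the paper's convention $\tilde\psi_{|U_{i\alpha}}$ sends the local section of $L^{\otimes 2}$ to $\lambda_{i\alpha}$ rather than to $\lambda_{i\alpha}\otimes 1_D$; the compatibility condition is then that $\lambda_{i\alpha}\otimes 1_D$ be a square, which is exactly what you verify.
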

\begin{proof}
    The canonical section $1_D$ of $\mathcal{O}_X(D)$ is locally given by (the pullback of) $\frac{x_i}{x_0}$. By construction,
    \begin{multline*}
        \lambda_{i\alpha}((-1)^{\alpha}\cdot\partial_i\wedge\partial_\alpha)\otimes \left(\frac{x_i}{x_0}\right) \\
        =\left(\frac{x_i}{x_0}\right)^{n+d(n+1)}
    \left(\frac{x_i}{x_0}\right)^{-n(n+1)+1}
    \left(\frac{x_iy_\alpha}{x_0y_1-x_1y_0}\right)^{n(n+1)/2},
    \end{multline*}
    so by Equation~\eqref{eq:lambda_trans},
    \begin{multline*}
        \lambda_{i\alpha}((-1)^{\alpha}\cdot\partial_i\wedge\partial_\alpha)
    \otimes 
    \left(\frac{x_i}{x_0}\right)\\
    =\left(\frac{x_i}{x_j}\right)^{N+1}
    \left(\frac{x_i}{x_j}\right)^{-2M}
    \left(\frac{x_iy_\alpha}{x_jy_\beta}\right)^{M}\lambda_{j\beta}((-1)^{\alpha}\cdot\partial_i\wedge\partial_\alpha)\otimes \left(\frac{x_j}{x_0}\right).
    \end{multline*}
    Thus the maps ${{\tilde\psi}|}_{U_{j\beta}}$ and ${{\tilde\psi}|}_{U_{i\alpha}}$ differ by the transition function
    $$\left( \left(\frac{x_i}{x_j}\right)^{(N+1)/2}
    \left(\frac{x_i}{x_j}\right)^{-M}
    \left(\frac{x_iy_\alpha}{x_jy_\beta}\right)^{M/2} \right)^{\otimes 2}.$$
    So $\{(U_{i\alpha},\tilde\varphi_{i\alpha})\}$ is well-defined, and the relative orientation is a square by construction. This proves the lemma.
\end{proof}

\section{Euler Class}

In this section we compute the Euler class of $\V$. Although the result is anticipated by the fact that $X$ is odd-dimensional (see Remark~\ref{rem:odd_dim_is_mH}), we adopt the classical strategy of defining a section with only one zero, compute the Scheja--Storch form, and finally apply Equation~\eqref{eq:euler}.
\begin{thm}\label{thm:4.4}
    Let $n$ and $d$ be positive integers such that the following conditions are satisfied
    \begin{itemize}
        \item $n\equiv 2 \,\mathrm{mod} \,4$,
        \item $d\equiv 0 \,\mathrm{mod} \,2$.
    \end{itemize}
    The vector bundle $\V$
    is relatively orientable. Moreover, if $d\ge n$, the Euler class of $\V$ is
    \begin{equation}\label{eq:main}
        e(\V)=d\frac{n!}{2} \mathbb{H}.
    \end{equation}
\end{thm}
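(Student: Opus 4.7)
The first assertion is precisely Proposition~\ref{prop:conditionsnd}, so I focus on the Euler class formula. My plan, as announced at the opening of the section, is to exhibit a single section $\sigma$ of $\V$ whose zero scheme is concentrated at one point $z\in X$, to compute the Scheja--Storch form at $z$, and to conclude via Equation~\eqref{eq:euler}.

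A section of $\V=\Pnd\oplus\mathcal{O}_X(1)^{\oplus n}$ is determined by data $(\partial^n f,L_1,\ldots,L_n)$, where $f$ is a homogeneous polynomial of degree $d$ and $L_1,\ldots,L_n$ are linear forms on $\mathbb P^{n+1}_k$. Its zero locus consists of flags $(p,\ell)$ such that $p\in Y\cap\mathcal{L}$, with $Y=V(f)$ and $\mathcal{L}=V(L_1,\ldots,L_n)$, and such that $\ell$ osculates $Y$ at $p$. For a generic section these zeros form $dn!$ reduced $\bar k$-points; to collapse them to a single point I would choose $(f,L_1,\ldots,L_n)$ degenerately, so that $\mathcal{L}$ meets $Y$ with total contact $d$ at one $k$-rational point $p_0$, and so that the subscheme $F=f^{(1)}\cap\cdots\cap f^{(n)}\subset\mathbb P^n_k$ is set-theoretically a single point corresponding to a line $\ell_0$ through $p_0$. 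A convenient normalization is to take coordinates with $p_0=[1:0:\cdots:0]$, $\mathcal{L}=V(x_2,\ldots,x_{n+1})$, and to rig the Taylor expansion of $f$ at $p_0$ by hand so that both degeneracies hold simultaneously.

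In the Nisnevich chart $(U_{i\alpha},\tilde\varphi_{i\alpha})$ of Definition~\ref{defn:my_Nis} centered at $z=(p_0,\ell_0)$, the section $\sigma$ becomes a $(2n+1)$-tuple of polynomial functions in $2n+1$ variables, and the local algebra $\mathcal{O}_{\sigma^{-1}(0),z}$ is Artinian of $k$-vector-space dimension $dn!$. The block decomposition of $\V$ should split $\mathcal{O}_{\sigma^{-1}(0),z}$ as a tensor product of a factor of dimension $d$, coming from the collapse of $Y\cap\mathcal{L}$ onto $p_0$ and controlled by the $L_i$, and a factor of dimension $n!$, coming from the collapse of the osculating lines at $p_0$ and controlled by $\partial^n f$. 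The Scheja--Storch form should then factor as the tensor product of the two associated bilinear forms, and the problem reduces to computing each piece and showing the product is $d(n!/2)\mathbb H$.

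The main obstacle lies in the osculation factor: unpacking the principal parts morphism in the twisted Nisnevich coordinates, writing out the Jacobian explicitly, and identifying the resulting symmetric bilinear form on $\mathcal{O}_{F,\ell_0}$. A useful structural check comes from the odd-dimensionality of $X=\mathbb P(T_{\mathbb P^{n+1}_k})$, which by Remark~\ref{rem:odd_dim_is_mH} forces $e(\V)$ to be a $\mathbb Z$-multiple of $\mathbb H$; the rank can then be pinned down by complexifying and invoking Equation~\eqref{eq:n!}, which yields $dn!$ and hence the stated coefficient $d n!/2$. Combining the Scheja--Storch computation at the unique zero $z$ with Equation~\eqref{eq:euler} finishes the proof.
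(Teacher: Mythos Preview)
Your proposal is two arguments braided together, and only one of them is complete.

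The Scheja--Storch portion follows the paper's strategy but stops short of execution. The paper chooses the explicit data $f=x_0^d+\sum_{i=1}^{n} x_{n+1}^{d-i}x_i^i$ and $g_j=x_j$, works in the chart $U_{(n+1)0}$, shows the section has a unique zero there, writes the local ring as
\[
\frac{k[w_1,\ldots,w_n, z_0]_0}{(z_0^{d-1}+w_1,\,z_0^{d-2}+w_2^2,\,\ldots,\,z_0^{d-n}+w_n^n,\,z_0^d)}
\otimes
\frac{k[z_1,\ldots, z_n]_0}{(z_1,\ldots,z_n)},
\]
and determines the Scheja--Storch class of the left factor by an induction on $n$. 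Your sketch describes the \emph{shape} of such a computation but never names $f$, never verifies the tensor splitting, and never evaluates the form; your own paragraph flags the osculation factor as ``the main obstacle'' and leaves it open. (Incidentally, the paper's tensor decomposition is not $d\times n!$ as you anticipate: the $g_j$-factor is one-dimensional and the entire length $dn!$ sits in the principal-parts factor.) Your closing sentence, ``Combining the Scheja--Storch computation at the unique zero $z$\ldots'', is therefore not supported by anything above it.

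What actually carries the logical weight in your proposal is the fallback: Remark~\ref{rem:odd_dim_is_mH} gives $e(\V)=m\,\mathbb H$ for some integer $m$, and the complex rank $dn!$ pins down $m=dn!/2$. This is correct and is a genuinely different route from the paper's proof---shorter, but less explicit. The paper acknowledges this alternative in the Remark yet opts for the direct computation, presumably because the local-index description is reused in the proof of Theorem~\ref{thm:main-1}. Two small corrections on the fallback: the Remark does not invoke ``odd-dimensionality of $X$'' as such but rather applies a result of Srinivasan--Wickelgren to the direct-sum decomposition $\mathcal{O}_X(1)^{\oplus n}\oplus\Pnd$; and the rank $dn!$ is not Equation~\eqref{eq:n!} alone but $d$ copies of it, which needs either your first-paragraph identification of the generic zero locus or the reference cited in the Remark.

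If you want the Scheja--Storch proof, you must supply an explicit $f$ and carry the computation through; if you want the short proof, drop the Scheja--Storch scaffolding and argue cleanly from the Remark plus the rank.
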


\begin{proof}
    By Proposition~\ref{prop:conditionsnd}, $\V$ is relatively orientable and its Euler class is well-defined.
    Consider the global section $f$ of $\mathcal{O}_{\mathbb P^{n+1}_k}(d)$, and the global sections $\{g_j\}_{j=1}^{n}$ of $\mathcal{O}_{\mathbb P^{n+1}_k}(1)$ given by
    $$f=x_0^d+\sum_{i=1}^{n} x_{n+1}^{d-i}x_i^i, \,\,\,\,\,  g_j = x_j.$$
    Combining $f$ and $\{g_j\}_{j=1}^{n}$, we obtain a global section of $\mathcal{O}_{\mathbb P^{n+1}_k}(d)\oplus\mathcal{O}_{\mathbb P^{n+1}_k}(1)^{\oplus n}$,
    with zeros in $X$ consist only of 
    one $k$-point $p$ in $U_{n+1}:=\{x_{n+1}\neq 0\}\cong\mathbb A_k^{n+1}$.

    Let $Z:=\ev^{-1}(p)\cong \mathbb P_{k}^n$. Clearly ${{\mathcal{O}_{X}(1)}|}_{Z}$ is trivial and by Equation~\eqref{eq:omega_Z}  ${{\omega_{\pi}}|}_Z=\mathcal{O}_Z(1)$. Thus any bundle of principal parts $\P^t(d)$ splits when restricted to $\ev^{-1}(U_{n+1})=U_{n+1}\times Z$ as $\mathrm{Ext}^1(\mathcal{O}_Z,\mathcal{O}_Z(t))=0$ for $t\ge 0$. 

    Let $\partial^n f$ the induced section of $\P^n(d)$. Consider the section $\Lambda=(\partial^n f, g_1,\ldots,g_{n})$ of $\V$. Note that, by evaluation, all zeros of $\Lambda$ are contained in $\ev^{-1}(U_{n+1})$. The surjection
    \begin{equation}\label{eq:surj}
        \tau\colon\V \longrightarrow \ev^*(\mathcal{O}_{\mathbb P^{n+1}_k}(d)\oplus\mathcal{O}_{\mathbb P^{n+1}_k}(1)^{\oplus n}),
    \end{equation}
    restricted to $\ev^{-1}(U_{n+1})$ splits because all the $\P^t(d)$ do. Thus the kernel of \eqref{eq:surj} is
    \begin{equation*}
        \ker (\tau)=\mathcal{O}_{U_{n+1}}\otimes\bigoplus_{i=1}^n\mathcal{O}_Z(i).
    \end{equation*}
    
    Let $z_i:=\ev^*({{{x}_i}|}_{U_{n+1}})=\ev^*(x_i/x_{n+1})$ and consider the pullback of ${f|}_{U_{n+1}}$
    $$\ev^*({f|}_{U_{n+1}})=z_0^d+\sum_{i=1}^{n} z_i^i.$$
    Computing the Hasse derivatives of $f$, we see that ${\Lambda|}_{\ev^{-1}({U_{n+1}})}$ is the global section of $\ker(\tau)\oplus\mathcal{O}_{U_{n+1}}^{\oplus n+1}$, given by
    \begin{equation}\label{eq:lambda}
        {\Lambda|}_{\ev^{-1}({U_{n+1}})}=(f^{(1)},f^{(2)}, \ldots, f^{(n)},         \ev^*({f|}_{U_{n+1}}), z_1,z_2,\ldots,z_n),
    \end{equation}
    where for all $i=1,\ldots,n$,
    $$f^{(i)}:=\binom{d}{i}z_0^{d-i}(\d z_0)^i+(\d z_i)^i+\binom{i+1}{i}z_{i+1}(\d z_{i+1})^i+\cdots +\binom{n}{i}z_{n}^{n-i}(\d z_{n})^i,$$
    and $\{\d z_0,\ldots,\d z_n\}$ are independent global sections of $\mathcal{O}_Z(1)$. The section in Equation~\eqref{eq:lambda} has a unique (non-simple, rational) zero in $\ev^{-1}(U_{n+1})$, and it is in the open set ${U_{n+1}} \times U_0$, where $U_0:=\{\d z_0\neq 0\}$. This open set is a compatible Nisnevich neighborhood around the zero by Lemma~\ref{lem:Rel_Ori_case}. Finally, setting $w_i:=\d z_i/\d z_0$,
    \begin{equation}
        \mathcal{O}_{\{\Lambda=0\},0} \cong\frac{k[w_1,\ldots,w_n, z_0,\ldots, z_n]_0}{(f^{(1)},f^{(2)},\ldots,f^{(n)},\ev^*({f|}_{{U_{n+1}}}),{z}_1,\ldots,{z}_n)}\label{eq:ffff}
    \end{equation}
    \begin{align}
        &\cong \frac{k[w_1,\ldots,w_n, z_0,\ldots, z_n]_0}{\left(\binom{d}{1}z_0^{d-1}+w_1,\binom{d}{2}z_0^{d-2}+w_2^2,\ldots,\binom{d}{n}z_0^{d-n}+w_n^n,z_0^d,{z}_1,\ldots,{z}_n\right)} \label{eq:zwzw} \\
        &\cong \frac{k[w_1,\ldots,w_n, z_0]_0}{\left(\binom{d}{1}z_0^{d-1}+w_1,\binom{d}{2}z_0^{d-2}+w_2^2,\ldots,\binom{d}{n}z_0^{d-n}+w_n^n,z_0^d\right)}
        \otimes
        \frac{k[z_1,\ldots, z_n]_0}{({z}_1,\ldots,{z}_n)}.\label{eq:bigotimes_ring_SS}
    \end{align}
    The passage from \eqref{eq:ffff} to \eqref{eq:zwzw} is possible because the ideal $(f^{(i)}, z_1,\ldots,z_n)$ equals $$\left(\binom{d}{i}z_0^{d-i}+w_i,z_1,\ldots,z_n\right),$$as $f^{(i)}-\binom{d}{i}z_0^{d-i}-w_i\in (z_1,\ldots,z_n)$. The same holds for $\ev^*({f|}_{U_{n+1}})-z_0^d$. 
    Using functoriality, the class of the Scheja--Storch form of \eqref{eq:bigotimes_ring_SS} is the product of the classes of the forms computed in each ring. For the ring on the right it is $\langle 1 \rangle$ by \cite[Lemma~5]{MR4073493}. For the other ring, a direct computation (e.g., by induction on $n$) shows that its class is 
    $$d\frac{n!}{2}\mathbb H,$$
    so Equation~\eqref{eq:main} is proved.
\end{proof}
\begin{rem}\label{rem:odd_dim_is_mH}
    A priori, we can deduce that $e(\V)$ is a multiple of $\mathbb H$ by applying \cite[Proposition~19]{MR4237952} to $\mathcal E=\mathcal{O}_{X}(1)^{\oplus n}$ and $\mathcal E'=\Pnd$. After that, me may apply \cite[Lemma~5]{MR4237952} paired with \cite[Proposition~6.1]{MR4256011}.
\end{rem}

We are now ready to prove the main theorem.
\begin{proof}[Proof of Theorem \ref{thm:main-1}]
    Suppose that $Y$ and $\mathcal{L}$ are given by, respectively, polynomials $f$ and $\{g_j\}_{j=1}^n$. Since $\mathcal{L}$ is $k$-rational, all $g_j$ are linear. For each $p\in Y\cap \mathcal{L}$, let $U$ be an affine open set containing $p$. We can construct a morphism
    \begin{eqnarray}
    \Lambda\colon U'& \longrightarrow & \mathbb A^{2n+1}_k \label{eq:LambdaUprimo}\\
    x & \longmapsto & (\partial^n \ev^*(f)(x),\ev^*(g_1)(x),\ldots,\ev^*(g_n)(x)),\nonumber
    \end{eqnarray}
    defined in any affine open set $U'\cong U\times\mathbb A^{n}_k$ mapping surjectively to $U$.
    Consider the $k(p)$ lift $\Lambda_{k(p)}$ of $\Lambda$. Since the extension of scalars is compatible with pullback, and with the differential morphism $\partial^n$ (see \cite[(16.7.9.1)]{MR238860}), we get
    $$\Lambda_{k(p)}(x)=(\partial^n\ev^*(f_{k(p)})(x),\ev^*(g_1)(x)_{k(p)},\ldots,\ev^*(g_n)(x)_{k(p)}).$$
    We may suppose, after a translation, that a $k(p)$ lift $\tilde p$ of $p$ is the origin of $U_{k(p)}$. Moreover, after a linear transformation, we may suppose that $\ev^*(g_i)(x)=z_i$ where $\{z_0,\ldots,z_n\}$ are coordinates of $U$. Note that $f_{k(p)}$ decomposes as sum of homogeneous polynomials around $0$, thus
    $$\ev^*(f_{k(p)})=\sum_{i=1}^d\ev^*(f^{(i)}_{k(p)})=
    \sum_{i=1}^d f_{k(p)}(z_0,\ldots,z_n)^{(i)}.$$
    In the ring $R=k(p)[\d z_0,\ldots,\d z_n,z_0,\ldots,z_n]$, the $t^\mathrm{th}$ Hasse derivative $\d^t \ev^*(f_{k(p)})^{(i)}$ is contained in the ideal $(z_0,\ldots,z_n)$ if $t<i$. Moreover, since the intersection of $Y$ and $\mathcal{L}$ is transverse at $p$, there must be an $\alpha\in k(p)\setminus\{0\}$ such that $\ev^*(f_{k(p)})-\alpha z_0$ is contained in the ideal $(z_0,\ldots,z_n)$. It follows:
    $$\frac{R}{(\Lambda_{k(p)})}\cong 
    \frac{k(p)[\d z_0,\ldots,\d z_n]}{(f_{k(p)}^{(1)},\ldots,f_{k(p)}^{(n)})}\otimes
    \frac{k(p)[z_0,\ldots,z_n]}{(z_0,\ldots,z_n)}.$$
    Since $Y$ and $\mathcal{L}$ are general, the polynomials $f_{k(p)}^{(1)},\ldots,f_{k(p)}^{(n)}$ meet at finite number of reduced points. If $l$ is any of those points, by \cite[Lemma~5.5]{MR4211099} it follows
    $$\deg_{(l,\tilde p)}^{\mathbb A^1_{k(p)}}(\Lambda_{k(p)})=\deg_{l}^{\mathbb A^1_{k(p)}}(f_{k(p)}^{(1)},\ldots,f_{k(p)}^{(n)})=\mathrm{Tr}_{k(l)/k(p)}\langle J(l)\rangle.$$
    Thus the sum $\sum_l \deg_{(l,\tilde p)}^{\mathbb A^1_{k(p)}}(\Lambda_{k(p)})$ is $J(Y,p)$.
    The polynomials $f$ and $\{g_j\}_{j=1}^n$ define a global section $\Psi$ of $\V$. Hence by Theorem~\ref{thm:4.4},
    \begin{equation}\label{eq:0}
        \sum_{x:\Psi=0}\deg_x^{\mathbb A^1_{k}}(\Psi)=d\frac{n!}{2}\mathbb H.
    \end{equation}
    On the other hand, for each $x$ we may find a sistem of Nisnevich coordinates $U'$ such that ${\Psi|}_{U'}$ is represented like in Equation~\eqref{eq:LambdaUprimo}. Hence
    \begin{align}
        \sum_{x:\Psi=0}\deg_x^{\mathbb A^1_{k}}(\Psi) &=
        \sum_{p\in Y\cap\mathcal{L}} \sum_{l:\ev(l)=p} \mathrm{Tr}_{k(l)/k} \langle \deg_{(l,\tilde p)}^{\mathbb A^1_{k(l)}}\Lambda_{k(l)}\rangle \label{eq:1}\\
         &= \sum_{p\in Y\cap\mathcal{L}}\mathrm{Tr}_{k(p)/k} \left(
         \sum_{l:\ev(l)=p} \mathrm{Tr}_{k(l)/k(p)}\langle \deg_{(l,\tilde p)}^{\mathbb A^1_{k(l)}} \Lambda_{k(l)}\rangle\right) \label{eq:2}\\
          &= \sum_{p\in Y\cap\mathcal{L}}\mathrm{Tr}_{k(p)/k} \left(\sum_{l:\ev(l)=p}\deg_{(l,\tilde p)}^{\mathbb A^1_{k(p)}} \Lambda_{k(p)}
         \right) \label{eq:3}\\
         &= \sum_{p\in Y\cap\mathcal{L}}\mathrm{Tr}_{k(p)/k} \langle J(Y,P)\rangle.\label{eq:4}
    \end{align}
     We used \cite[Corollary~1.4]{MR4162156} in Equations~\eqref{eq:1} and \eqref{eq:3}. In Equation~\eqref{eq:2}, we used the decomposition of the trace of a tower of fields, see \cite[Theorem~3.8.5.(3)]{MR2459247}.
     Finally, combining Equations~\eqref{eq:0} and \eqref{eq:4}, we conclude the proof.
\end{proof}
\subsection{Final remarks}
\begin{rem}\label{rem:McKean}
    In the proof, we used the geometric interpretation of $J(l)$ given in Section~5 of McKean's article. There is another possible way to prove our result, which uses the main result of his paper. We give a sketch of it here.
    One may observe that, by \cite[Theorem~1.2]{MR4211099}, $$J(Y,p)=\frac{n!}{2}\mathbb H\in\GW(k(p)).$$
    Together with $\mathrm{Tr}_{k(p)/k}(\mathbb H) =[k(p):k]\mathbb H$ (see \cite[Lemma~2.12]{kim2023global}), and
    \begin{align*}
    \sum_{p\in Y\cap\mathcal{L}} [k(p):k]&= d,
    \end{align*}
    these would imply Theorem~\ref{thm:main-1}.
\end{rem}

\begin{rem}
    One may ask to compute the lines meeting a smooth hypersurface $Y\subset \mathbb P^{n+1}_k$ with contact order $2n+1$. For example,  inflectional tangent lines to a plane curve. These would be given by a zero section of $\P^{2n}(d)$, but that bundle is not relatively orientable for any value of $n$. 
\end{rem}
One may extend the result when $d$ is odd and $k=\mathbb R$. Following Subsection~\ref{subs:non_rel_ori}, we consider a divisor $D=\ev^*(H)$ where $H\subset \mathbb P^{n+1}_k$ is a hyperplane. Thus $\V$ is relatively orientable with respect to $D$, so Lemma~\ref{lem:LV21} applies to $E=\V$.

In general, the space $H^0(E)^\circ\setminus V_D$  could be totally disconnected, so there might not be interesting enumerative results.

However, we believe that it happens something similar to \cite[Lemma~4.1]{MR4253146}. That is, $H^0(\V)^\circ\setminus V_D$ has only two connected components. For any section
\begin{equation}\label{eq:sections}
    (f,g_1,\ldots,g_n)\in H^0(\mathcal{O}_{\mathbb P^{n+1}_k}(d)\oplus\mathcal{O}_{\mathbb P^{n+1}_k}(1)^{\oplus n}),
\end{equation}
let us denote by $\mathcal{L}$ the locus $\{g_1=\ldots=g_n=0\}$, and by
$$A\subset H^0(\mathcal{O}_{\mathbb P^{n+1}_k}(d)\oplus\mathcal{O}_{\mathbb P^{n+1}_k}(1)^{\oplus n})$$ 
the space of all sections such that either
\begin{enumerate}
    \item $\dim(\mathcal{L}\cap H)\ge 1$, or
    \item $p=\mathcal{L}\cap H$ is a $\mathbb R$-point and $f(p)=0$.
\end{enumerate}
One can easily show that only the second condition makes $A$ a subspace of codimension $1$. Thus, the complement of $A$ has two connected components, where one is the locus of sections as in Equation~\eqref{eq:sections} such that $\mathcal{L}$ is a line not contained in $H$ and $f(p)>0$. The other component is obtained by the involution
\begin{equation}\label{eq:involution}
    (f,g_1,\ldots,g_n)\longmapsto (-f,g_1,\ldots,g_n).
\end{equation}
By the continuous map
$$H^0(\V)\setminus V_D\longrightarrow
H^0(\mathcal{O}_{X}(d)\oplus\mathcal{O}_{X}(1)^{\oplus n})\setminus \ev^*(A),$$
We deduce that $H^0(\V)^\circ\setminus V_D$ has at least two connected components. We believe that there are only two, and they are isomorphic through an involution similar to \eqref{eq:involution}. With minor changes, the proof of Theorem~\ref{thm:4.4} is still valid in this case.


In the appendix of the arXiv version of this paper, we exhibit a computation described by the theorem for two cubic surfaces, $Y_+$ and $Y_-$, meeting a line $\mathcal{L}$, with respect to a relative divisor. Our example is set for $k=\mathbb Q$, but can easily be adapted to $\mathbb R$. Thus, Lemma~\ref{lem:LV21} implies that, in the connected components of sections defined by $(Y_+,\mathcal{L})$ and $(Y_-,\mathcal{L})$, the theorem holds true. In order to perform the cohomological computations, we used \textit{Macaulay}2 and \verb|OSCAR| (see \cite{M2,OSCAR-book}).

Finally, we may extend the theorem even to other values of $n$. For example, when $n=1$ osculating lines are exactly tangent lines to a point of a curve $Y$. We plan to address this problem in the future.

\appendix
\section{An explicit example}
Let $k=\mathbb Q$, and let $Y_\pm\subset\mathbb{P}^3_k$ be two cubics given by the polynomials 
\begin{equation}\label{eq:hom_Y}
    f_\pm = x_0^3+(x_2^2 \pm x_3^2)x_0-x_1^3+x_2^3+ x_3^3.
\end{equation}
The intersection of $Y_\pm$ with the line $\mathcal{L}:x_2=x_3=0$ is the subscheme consisting of the union of a $\mathbb Q$-point $p$, and a $\mathbb Q(\e)$-point $q$, where $\e$ is a root of $x^2+x+1$. That is
\begin{equation*}
    \mathrm{Proj}\frac{\mathbb{Q}[x_0,x_1,x_2,x_3]}{(f_\pm,x_2,x_3)}=\mathrm{Proj}\frac{\mathbb{Q}[x_0,x_1]}{(x_0-x_1)}\bigoplus\mathrm{Proj}\frac{\mathbb{Q}[x_0,x_1]}{(x_0^2+x_0x_1+x_1^2)}=p\cup q.
\end{equation*}
By Serre's Tor formula (\cite{MR0201468} or \cite[Appendix A]{MR0463157}), a line $l$ is osculating at $x\in \mathbb P^3_k$ if and if
$$\mathrm{length}(\mathcal{O}_{Y_\pm,x}\otimes_{\mathcal{O}_{\mathbb P^3_k,x}} \mathcal{O}_{l,x})\ge 3.$$
We determinate explicitly the ideal sheaves of the osculating lines at the points $p$, and $q$. After that, we will see the computation as in Theorem~\ref{thm:main-1}, but in the relatively oriented case.

\subsection{The rational point}
Let us compute the osculating lines at $p$. Let us see the case $Y_-$ first. It is convenient to see $f_-$ in the following way:
\begin{equation}
    (x_0-x_1)(x_0^2+x_0x_1+x_1^2)+(x_2-x_3)(x_2+x_3)x_0+x_2^3+x_3^3=0.
\end{equation}
It is clear that the tangent plane at $p$ is given by the equation $x_0-x_1$. Note that the residual part of degree $2$ decomposes as the union of two lines: $x_2-x_3$ and $x_2+x_3$.
Thus we have the ideals $I=(x_0-x_1,x_2-x_3)$ and $J=(x_0-x_1,x_2+x_3)$. The multiplicity of intersection at $p$ between $I$ and $Y_-$ is $3$. The cohomological computations in this section are available\footnote{See \url{https://github.com/mgemath/OsculatingLines_computations}.}.
We do not need to confirm the multiplicity also for the second line, as that is contained in $Y_-$.

Now, let us see the case $Y_+$.  It is convenient to see $f_+$ is the following way:
\begin{equation}
    (x_0-x_1)(x_0^2+x_0x_1+x_1^2)+(x_2^2+x_3^2)x_0+x_2^3+x_3^3=0.
\end{equation}
Arguing like before, note that we have only one subvariety $Z$ whose multiplicity $i_p(Z,Y_+)\ge3$, and it is given by the ideal  $I=(x_0-x_1,x_2^2+x_3^2)$. In particular, we do not have rational osculating lines, but an irreducible degree $2$ subvariety that decomposes as the union of two lines over $\mathbb{Q}(i)$. We expect that $i_p(Z,Y_+)$ is the sum of the multiplicities of those two complex lines.
\subsection{The complex point}
Unsurprisingly, the analysis does not change if we consider the other point. That is, when the rational "osculating subvarieties" are two, each of them has multiplicity $3$ (if not contained). When such subvarieties are unique, the multiplicity is $6$.

In the case $Y_-$, the two osculating subvarieties are defined by the ideals, respectively, $I=(x_0^2+x_0x_1+x_1^2,x_2-x_3)$ and $J=(x_0^2+x_0x_1+x_1^2,x_2+x_3)$. The second is contained in $Y_-$. 

In the case $Y_+$, the unique osculating subvariety is the scheme whose defining ideal is $I=(x_0^2+x_0x_1+x_1^2,x_2^2+x_3^2)$. Its multiplicity is that we expect.
\subsection{The enriched count}
Using enriched enumerative geometry, we need to count the osculating lines of $Y_\pm$, whose tangent point lies on the line $x_2=x_3=0$, in such a way that the count is $3\mathbb{H}\in\GW(\mathbb{Q})$.
Consider the affine subset of the Grassmannian $\mathbb{A}^4\subset G$, parameterizing lines in $\mathbb{P}^3$ not meeting the line $x_0=x_2=0$. For each point $(a,b,c,d)\in\mathbb{A}^4$ we associate the line
\begin{align*}
    x_1 &=  ax_0 + bx_2\\
    x_3 &=  cx_0 + dx_2.
\end{align*}
The restriction of the universal family  $\pi\colon X\rightarrow G$ to this open subset is the projection $\pi\colon\mathbb{A}^4\times \mathbb{P}^1\rightarrow\mathbb{A}^4$. The evaluation map $\ev\colon X\rightarrow \mathbb{P}^3$ is
\begin{eqnarray*}
    \ev\colon\mathbb{A}^4\times \mathbb{P}^1 & \longrightarrow & \mathbb{P}^3\\
    (a,b,c,d)\times [\mu:\lambda] & \mapsto & [\mu:a\mu+b\lambda:\lambda:c\mu+d\lambda].
\end{eqnarray*}
We can take an open subset $\mathbb{A}^5\subset X$ by taking $\{\mu\neq0\}$. This is equivalent to considering the problem outside the divisor $x_0=0$. The pullback of $f_\pm$ under $\ev$ defines the following affine equation in $\mathbb{A}^5$
\begin{multline*}
f^\circ_\pm(a,b,c,d,\lambda) = 1+c^3\pm c^2-a^3+(3 c^2 d-3 a^2 b\pm2 c d)\lambda\\
+(1-3 a b^2+3 c d^2\pm d^2)\lambda^2+(1-b^3+d^3)\lambda^3.
\end{multline*}

In the same way, $x_2=x_3=0$ define $\lambda=c+d\lambda=0$. Thus, the pointed lines we are looking for are parameterized by points $(a,b,c,d,\lambda)\in\mathbb{A}^5$ such that:
\begin{equation}\label{eq:system}
    f^\circ_\pm  \bigg|_{\lambda=0}=\frac{\partial}{\partial\lambda}f^\circ_\pm\bigg|_{\lambda=0}=\frac{1}{2}\frac{\partial^2}{\partial\lambda^2}f^\circ_\pm\bigg|_{\lambda=0}=\lambda=c+d\lambda=0.
\end{equation}
Note that we used the Hasse derivatives. Hence, we have to solve the following system of equations.
\begin{align*}
    1+c^3\pm c^2-a^3  &= 0\\
    3 c^2 d-3 a^2 b\pm2 c d &= 0\\
    1-3 a b^2+3 c d^2\pm d^2 &= 0\\
    \lambda &= 0\\
    c+d\lambda &= 0.
\end{align*}
Its solutions are the same of the following system 
\begin{align*}
    a^3  &= 1\\
    b=c=\lambda &= 0\\
    \pm d^2 &= -1.
\end{align*}
The Jacobian of \eqref{eq:system} is
\begin{multline*}
    \mathcal{J}_\pm=\pm 18a^4d+54a^4cd-27a^4d^2\lambda+27b^2 (\mp2 a^2 c \lambda - 3 a^2 c^2 \lambda) \\
    +18b (\mp2 a^3 c - 3 a^3 c^2 \pm 2 a^3 d \lambda + 
    6 a^3 c d \lambda).
\end{multline*}
Fortunately, when we evaluate $\mathcal{J}_\pm$ at any solution, the result is simply
\begin{equation}
    \mathcal{J}_\pm=\pm 18 a^4 d=\pm 3^22ad.
\end{equation}
Let us compute $\sum_l\mathrm{Tr}_{k(l)/k}(\mathcal J_\pm|_{l})$ where $l$ runs among the solutions. 

Let us consider the case $Y_+$. The first solution is the $k(l)$-point where
\begin{equation*}
    \frac{\mathbb{Q}[a,b,c,d,\lambda]}{(a-1,b,c,d^2+1,\lambda)}\cong \frac{\mathbb{Q}[d]}{(d^2+1)}=: k(l).
\end{equation*}
There is a symmetric $k(l)$-bilinear form defined by $\langle\mathcal J_+\rangle=\langle 2d\rangle$, that is,
\begin{eqnarray*}
    \mathrm{Sym}^2k(l) & \longrightarrow & k(l)\\
    z\otimes w & \longmapsto & 2dzw.
\end{eqnarray*}
If we compose it with the trace map\footnote{The trace map of $\mathbb Q(i)/\mathbb Q$ sends a complex number $z$ to $\Re (2z)$.} $k(l)\rightarrow k$, we get a symmetric $\mathbb Q$-bilinear form of rank two with eigenvalues $-4$ and $4$ and eigenvectors $\{\frac{\sqrt{2}}{2}+ d\frac{\sqrt{2}}{2},\frac{\sqrt{2}}{2}-d\frac{\sqrt{2}}{2}\}$.
Its class is $\langle4\rangle+\langle-4\rangle=\mathbb H\in \GW(\mathbb Q)$. The other solution is the $k(l)$-point where
\begin{equation*}
    \frac{\mathbb{Q}[a,b,c,d,\lambda]}{(a^2+a+1,b,c,d^2+1,\lambda)}\cong \frac{\mathbb{Q}[a,d]}{(a^2+a+1,d^2+1)}=: k(l).
\end{equation*}
The $k(l)$-bilinear form defined by $\langle\mathcal J_+\rangle=\langle 2ad\rangle$, composed with the trace map
\begin{equation*}
\begin{array}{cccc}
 \mathrm{Tr}_{k(l)/k}\colon &  k(l) & \longrightarrow & k \\
  & \alpha+\beta a + \gamma d +\delta ad & \longmapsto & 4\alpha-4\beta,
\end{array}
\end{equation*}
defines a symmetric $\mathbb Q$-bilinear form. Using the eigenvectors
$$
\left\{\frac{\sqrt{2}}{2}+ d\frac{\sqrt{2}}{2},\frac{\sqrt{2}}{2}-d\frac{\sqrt{2}}{2},
a\frac{\sqrt{2}}{2}+ ad\frac{\sqrt{2}}{2},a\frac{\sqrt{2}}{2}-ad\frac{\sqrt{2}}{2}\right\},
$$
we see that the eigenvalues are $8$ and $-8$, both with multiplicity two. Hence, the class of the bilinear form is $2 \mathbb H\in \GW(\mathbb Q)$. Finally,
\begin{equation*}
    \sum_l\mathrm{Tr}_{k(l)/k}(\mathcal J_+|_{l})=\mathbb H+2\mathbb H=3\mathbb H.
\end{equation*}
The case $Y_-$ is very similar, the final sum in this case is:
\begin{equation*}
    \sum_l\mathrm{Tr}_{k(l)/k}(\mathcal J_-|_{l})=\langle4\rangle+\langle-4\rangle+\mathbb H+\mathbb H=3\mathbb H.
\end{equation*}
These results agree with our main theorem. 

\bibliographystyle{amsalpha}
\bibliography{refs}

\newcommand{\etalchar}[1]{$^{#1}$}
\providecommand{\bysame}{\leavevmode\hbox to3em{\hrulefill}\thinspace}
\providecommand{\MR}{\relax\ifhmode\unskip\space\fi MR }
\providecommand{\MRhref}[2]{%
  \href{http://www.ams.org/mathscinet-getitem?mr=#1}{#2}
}
\providecommand{\href}[2]{#2}
\begin{thebibliography}{DGGM23}

\bibitem[BBM{\etalchar{+}}21]{MR4162156}
Thomas Brazelton, Robert Burklund, Stephen McKean, Michael Montoro, and Morgan
  Opie, \emph{The trace of the local {$\mathbb A^1$}-degree}, Homology Homotopy
  Appl. \textbf{23} (2021), no.~1, 243--255. \MR{4162156}

\bibitem[BLR90]{MR1045822}
Siegfried Bosch, Werner L\"{u}tkebohmert, and Michel Raynaud, \emph{N\'{e}ron
  models}, Ergebnisse der Mathematik und ihrer Grenzgebiete (3) [Results in
  Mathematics and Related Areas (3)], vol.~21, Springer-Verlag, Berlin, 1990.
  \MR{1045822}

\bibitem[BMP23]{MR4648854}
Thomas Brazelton, Stephen McKean, and Sabrina Pauli, \emph{B\'ezoutians and the
  {$\Bbb A^1$}-degree}, Algebra Number Theory \textbf{17} (2023), no.~11,
  1985--2012. \MR{4648854}

\bibitem[BW23a]{MR4557905}
Tom Bachmann and Kirsten Wickelgren, \emph{Euler classes: six-functors
  formalism, dualities, integrality and linear subspaces of complete
  intersections}, J. Inst. Math. Jussieu \textbf{22} (2023), no.~2, 681--746.
  \MR{4557905}

\bibitem[BW23b]{MR4635342}
\bysame, \emph{On quadratically enriched excess and residual intersections}, J.
  Reine Angew. Math. \textbf{802} (2023), 77--123. \MR{4635342}

\bibitem[Caz12]{MR3059240}
Christophe Cazanave, \emph{Algebraic homotopy classes of rational functions},
  Ann. Sci. \'{E}c. Norm. Sup\'{e}r. (4) \textbf{45} (2012), no.~4, 511--534.
  \MR{3059240}

\bibitem[CDH23]{MR4626882}
Ethan Cotterill, Ignacio Darago, and Changho Han, \emph{Arithmetic inflection
  formulae for linear series on hyperelliptic curves}, Math. Nachr.
  \textbf{296} (2023), no.~8, 3272--3300. \MR{4626882}

\bibitem[CM18]{MR3773793}
K.~Cieliebak and K.~Mohnke, \emph{Punctured holomorphic curves and {L}agrangian
  embeddings}, Invent. Math. \textbf{212} (2018), no.~1, 213--295. \MR{3773793}

\bibitem[Dar80]{BSMA_1880_2_4_1_348_1}
Gaston Darboux, \emph{Sur le contact des courbes et des surfaces}, Bulletin des
  Sciences Math\'ematiques et Astronomiques \textbf{2e s{\'e}rie, 4} (1880),
  no.~1, 348--384 (fr).

\bibitem[DEF{\etalchar{+}}24]{OSCAR-book}
Wolfram Decker, Christian Eder, Claus Fieker, Max Horn, and Michael Joswig
  (eds.), \emph{The {C}omputer {A}lgebra {S}ystem {OSCAR}: {A}lgorithms and
  {E}xamples}, 1 ed., Algorithms and {C}omputation in {M}athematics, vol.~32,
  Springer, 8 2024.

\bibitem[DGGM23]{MR4589636}
Cameron Darwin, Aygul Galimova, Miao Gu, and Stephen McKean, \emph{Conics
  meeting eight lines over perfect fields}, J. Algebra \textbf{631} (2023),
  24--45. \MR{4589636}

\bibitem[EH16]{MR3617981}
David Eisenbud and Joe Harris, \emph{3264 and all that---a second course in
  algebraic geometry}, Cambridge University Press, Cambridge, 2016.
  \MR{3617981}

\bibitem[Eis78]{MR0494226}
David Eisenbud, \emph{An algebraic approach to the topological degree of a
  smooth map}, Bull. Amer. Math. Soc. \textbf{84} (1978), no.~5, 751--764.
  \MR{494226}

\bibitem[Eis95]{MR1322960}
\bysame, \emph{Commutative algebra}, Graduate Texts in Mathematics, vol. 150,
  Springer-Verlag, New York, 1995, With a view toward algebraic geometry.
  \MR{1322960}

\bibitem[EL77]{MR0467800}
David Eisenbud and Harold~I. Levine, \emph{An algebraic formula for the degree
  of a {$C\sp{\infty }$} map germ}, Ann. of Math. (2) \textbf{106} (1977),
  no.~1, 19--44. \MR{467800}

\bibitem[Gro67]{MR238860}
A.~Grothendieck, \emph{\'{E}l\'{e}ments de g\'{e}om\'{e}trie alg\'{e}brique.
  {IV}. \'{E}tude locale des sch\'{e}mas et des morphismes de sch\'{e}mas
  {IV}}, Inst. Hautes \'{E}tudes Sci. Publ. Math. \textbf{24} (1967), no.~32,
  361. \MR{238860}

\bibitem[GS]{M2}
Daniel~R. Grayson and Michael~E. Stillman, \emph{Macaulay2, a software system
  for research in algebraic geometry}, Available at
  \url{http://www.math.uiuc.edu/Macaulay2/}.

\bibitem[Har77]{MR0463157}
Robin Hartshorne, \emph{Algebraic {G}eometry}, Springer-Verlag, New
  York-Heidelberg, 1977, Graduate Texts in Mathematics, No. 52. \MR{0463157}

\bibitem[Him77]{MR0458467}
G.~N. Him\v{s}ia\v{s}vili, \emph{The local degree of a smooth mapping},
  Sakharth. SSR Mecn. Akad. Moambe \textbf{85} (1977), no.~2, 309--312.
  \MR{458467}

\bibitem[Huy05]{MR2093043}
Daniel Huybrechts, \emph{Complex geometry}, Universitext, Springer-Verlag,
  Berlin, 2005, An introduction. \MR{2093043}

\bibitem[KP23]{kim2023global}
Hyun~Jong Kim and Sun~Woo Park, \emph{Global $\mathbb{A}^1$ degrees of covering
  maps between modular curves}, 2023.

\bibitem[KW19]{MR3909901}
Jesse~Leo Kass and Kirsten Wickelgren, \emph{The class of
  {E}isenbud-{K}himshiashvili-{L}evine is the local {$\mathbf{A}^1$}-{B}rouwer
  degree}, Duke Math. J. \textbf{168} (2019), no.~3, 429--469. \MR{3909901}

\bibitem[KW20]{MR4073493}
\bysame, \emph{A classical proof that the algebraic homotopy class of a
  rational function is the residue pairing}, Linear Algebra Appl. \textbf{595}
  (2020), 157--181. \MR{4073493}

\bibitem[KW21]{MR4247570}
\bysame, \emph{An arithmetic count of the lines on a smooth cubic surface},
  Compos. Math. \textbf{157} (2021), no.~4, 677--709. \MR{4247570}

\bibitem[Lee13]{MR2954043}
John~M. Lee, \emph{Introduction to smooth manifolds}, second ed., Graduate
  Texts in Mathematics, vol. 218, Springer, New York, 2013. \MR{2954043}

\bibitem[LP18]{MR3877435}
Maycol~Falla Luza and Jorge~Vit\'{o}rio Pereira, \emph{Extactic divisors for
  webs and lines on projective surfaces}, Michigan Math. J. \textbf{67} (2018),
  no.~4, 743--756. \MR{3877435}

\bibitem[LV21]{MR4253146}
Hannah Larson and Isabel Vogt, \emph{An enriched count of the bitangents to a
  smooth plane quartic curve}, Res. Math. Sci. \textbf{8} (2021), no.~2, Paper
  No. 26, 21. \MR{4253146}

\bibitem[McK21]{MR4211099}
Stephen McKean, \emph{An arithmetic enrichment of {B}\'{e}zout's {T}heorem},
  Math. Ann. \textbf{379} (2021), no.~1-2, 633--660. \MR{4211099}

\bibitem[McK22]{circles}
Stephen McKean, \emph{{Circles of Apollonius two ways}}, 2022,
  arXiv:2210.13288.

\bibitem[Mor06]{MR2275634}
Fabien Morel, \emph{{$\mathbb A^1$}-algebraic topology}, International
  {C}ongress of {M}athematicians. {V}ol. {II}, Eur. Math. Soc., Z\"{u}rich,
  2006, pp.~1035--1059. \MR{2275634}

\bibitem[Mor12]{MR2934577}
\bysame, \emph{{$\mathbb A^1$}-algebraic topology over a field}, Lecture Notes
  in Mathematics, vol. 2052, Springer, Heidelberg, 2012. \MR{2934577}

\bibitem[MS21]{MR4332489}
Dusa McDuff and Kyler Siegel, \emph{Counting curves with local tangency
  constraints}, J. Topol. \textbf{14} (2021), no.~4, 1176--1242. \MR{4332489}

\bibitem[MS23]{mikhalkin2023ellipsoidal}
Grigory Mikhalkin and Kyler Siegel, \emph{Ellipsoidal superpotentials and
  stationary descendants}, 2023.

\bibitem[Muk03]{MR2004218}
Shigeru Mukai, \emph{An introduction to invariants and moduli}, japanese ed.,
  Cambridge Studies in Advanced Mathematics, vol.~81, Cambridge University
  Press, Cambridge, 2003. \MR{2004218}

\bibitem[Mur21]{MR4256011}
Giosu{\`e} Muratore, \emph{A recursive formula for osculating curves}, Ark.
  Mat. \textbf{59} (2021), no.~1, 195--211. \MR{4256011}

\bibitem[MV99]{MR1813224}
Fabien Morel and Vladimir Voevodsky, \emph{{${\bf A}^1$}-homotopy theory of
  schemes}, Inst. Hautes \'{E}tudes Sci. Publ. Math. (1999), no.~90, 45--143.
  \MR{1813224}

\bibitem[MVW06]{MR2242284}
Carlo Mazza, Vladimir Voevodsky, and Charles Weibel, \emph{Lecture notes on
  motivic cohomology}, Clay Mathematics Monographs, vol.~2, American
  Mathematical Society, Providence, RI; Clay Mathematics Institute, Cambridge,
  MA, 2006. \MR{2242284}

\bibitem[Pau22]{MR4437612}
Sabrina Pauli, \emph{Quadratic types and the dynamic {E}uler number of lines on
  a quintic threefold}, Adv. Math. \textbf{405} (2022), Paper No. 108508, 37.
  \MR{4437612}

\bibitem[Sal65]{MR0200123}
George Salmon, \emph{A treatise on the analytic geometry of three dimensions.
  {V}ol. {II}}, Fifth edition. Edited by Reginald A. P. Rogers, Chelsea
  Publishing Co., New York, 1965. \MR{0200123}

\bibitem[Ser65]{MR0201468}
Jean-Pierre Serre, \emph{Alg\`ebre locale. {M}ultiplicit\'{e}s}, Lecture Notes
  in Mathematics, vol.~11, Springer-Verlag, Berlin-New York, 1965, Cours au
  Coll\`ege de France, 1957--1958, r\'{e}dig\'{e} par Pierre Gabriel, Seconde
  \'{e}dition, 1965. \MR{0201468}

\bibitem[SS75]{MR0393056}
G\"{u}nter Scheja and Uwe Storch, \emph{\"{U}ber {S}purfunktionen bei
  vollst\"{a}ndigen {D}urchschnitten}, J. Reine Angew. Math. \textbf{278/279}
  (1975), 174--190. \MR{393056}

\bibitem[SW21]{MR4237952}
Padmavathi Srinivasan and Kirsten Wickelgren, \emph{An arithmetic count of the
  lines meeting four lines in {${\bf P}^3$}}, Trans. Amer. Math. Soc.
  \textbf{374} (2021), no.~5, 3427--3451, With an appendix by Borys Kadets,
  Srinivasan, Ashvin A. Swaminathan, Libby Taylor and Dennis Tseng.
  \MR{4237952}

\bibitem[Vai81]{MR0626480}
Israel Vainsencher, \emph{Counting divisors with prescribed singularities},
  Trans. Amer. Math. Soc. \textbf{267} (1981), no.~2, 399--422. \MR{626480}

\bibitem[Voj07]{MR2349665}
Paul Vojta, \emph{Jets via {H}asse-{S}chmidt derivations}, Diophantine
  geometry, CRM Series, vol.~4, Ed. Norm., Pisa, 2007, pp.~335--361.
  \MR{2349665}

\bibitem[Wei09]{MR2459247}
Steven~H. Weintraub, \emph{Galois theory}, second ed., Universitext, Springer,
  New York, 2009. \MR{2459247}

\end{thebibliography}

\end{document}